\documentclass{amsart}

\usepackage{amsmath, amssymb, amsthm, amsfonts, marginnote, stmaryrd}

\usepackage{comment}

\usepackage{xcolor}

\input xy
\xyoption{all}

\usepackage{hyperref}

\swapnumbers
\theoremstyle{plain}
\newtheorem{theorem}[subsubsection]{Theorem}
\newtheorem{lemma}[subsubsection]{Lemma}
\newtheorem{prop}[subsubsection]{Proposition}
\newtheorem{cor}[subsubsection]{Corollary}

\theoremstyle{definition}

\newtheorem{remark}[subsubsection]{Remark}

\def\BB{\mathbb{B}}
\def\CC{\mathbb{C}}

\def\FF{\mathbb{F}}

\def\QQ{\mathbb{Q}}

\def\RR{\mathbb{R}}

\def\ZZ{\mathbb{Z}}

\newcommand\cA{\mathcal{A}}
\newcommand\cB{\mathcal{B}}

\newcommand\cD{\mathcal{D}}
\newcommand\cE{\mathcal{E}}
\newcommand\cF{\mathcal{F}}
\newcommand\cG{\mathcal{G}}
\newcommand\cH{\mathcal{H}}
\newcommand\cI{\mathcal{I}}
\newcommand\cJ{\mathcal{J}}

\newcommand\cM{\mathcal{M}}
\newcommand\cN{\mathcal{N}}

\newcommand\cP{\mathcal{P}}

\newcommand\cR{\mathcal{R}}

\newcommand\cT{\mathcal{T}}

\newcommand\cY{\mathcal{Y}}

\def\bI{\mathbf{I}}

\def\bP{\mathbf{P}}

\newcommand\tilW{\widetilde{W}}

\newcommand\alg{\textup{alg}}
\newcommand\Alg{\textup{Alg}}

\newcommand{\Fl}{\textup{Fl}}

\newcommand\id{\textup{id}}

\newcommand{\ind}{\textup{ind}}

\newcommand\Mod{\textup{Mod}}

\newcommand\res{\textup{res}}

\newcommand\Hom{\textup{Hom}}

\newcommand{\Ad}{\textup{Ad}}

\newcommand\xcoch{\mathbb{X}_*}

\renewcommand\a\alpha
\renewcommand\b\beta
\newcommand\g\gamma
\newcommand\G\Gamma
\renewcommand\d\delta
\newcommand\D\Delta

\newcommand{\ph}{\varphi}
\newcommand{\Sig}{\Sigma}
\newcommand{\s}{\sigma}
\renewcommand{\t}{\tau}

\renewcommand{\L}{\Lambda}
\newcommand{\om}{\omega}
\newcommand{\Om}{\Omega}

\newcommand{\incl}{\hookrightarrow}
\newcommand{\isom}{\stackrel{\sim}{\to}}

\newcommand{\surj}{\twoheadrightarrow}
\newcommand{\mt}{\mapsto}

\newcommand{\wt}[1]{\widetilde{#1}}
\newcommand{\wh}[1]{\widehat{#1}}
\newcommand\quash[1]{}
\newcommand\un{\underline}
\newcommand\ov{\overline}
\newcommand\bs{\backslash}
\newcommand\ot{\otimes}
\newcommand{\op}{\oplus}

\newcommand{\lr}[1]{(\!(#1)\!)}

\newcommand\pt{\textup{pt}}

\newcommand\vn{\varnothing}

\newcommand\xr{\xrightarrow}
\renewcommand\c{\circ}

\newcommand{\Sh}{\mathit{Sh}}

\newcommand{\beq}{\begin{equation*}}
\newcommand{\eeq}{\end{equation*}}

\newcommand{\mon}{\textup{mon}}
\newcommand{\lmon}{\textup{-mon}}

\newcommand{\colim}{\operatorname{colim}}
\newcommand{\Fun}{\operatorname{Fun}}

\newcommand\sss{\subsubsection}

\renewcommand\r{\rho}

\newcommand\bu{\bullet}

 \newcommand\ch{\textup{ch}}

\newcommand\CS{\mathcal{CS}}

\newcommand\opp{\textup{opp}}

\newcommand\sft{\subset_{ft}} 

\newcommand\lmod{\textup{-mod}}

\newcommand\hh{\textit{hh}}

\newcommand{\Morita}{\textup{Morita}}

\newcommand{\Qlbar}{\overline{\mathbb{Q}_{\ell}}}
\newcommand\one{\mathbf{1}}
\renewcommand\Pr{\mathrm{Pr}}

\newcommand\Gp{\textup{Gp}}

\newcommand\AI[1]{\textcolor{blue}{#1}}

\title{Tao-Travkin equivalence revisited}

\author{Penghui Li}
\address{Yau Mathematical Sciences Center\\Tsinghua University\\Beijing, China}
\email{lipenghui@mail.tsinghua.edu.cn}

\author{David Nadler}
\address{Department of Mathematics\\University of California, Berkeley\\Berkeley, CA  94720-3840}
\email{nadler@math.berkeley.edu}

\author{Yakov Varshavsky}
\address{Einstein Institute of Mathematics\\
Edmond J. Safra Campus\\
The Hebrew University of Jerusalem\\
Givat Ram\\ Jerusalem, 9190401, Israel}
\email{yakov.varshavsky@mail.huji.ac.il}

\author{Zhiwei Yun}
\address{Department of Mathematics, MIT,  77 Massachusetts Ave., Cambridge, MA 02139}
\email{zyun@mit.edu}

\dedicatory{}
\date{\today}
\subjclass[2020]{22E67}
\keywords{Loop group, affine Hecke category, Coxeter presentation}

\begin{document}

\begin{abstract} We give a simpler proof of the theorem of J.~Tao and R.~Travkin saying that (the neutral block of) the affine Hecke category is the monoidal colimit of finite Hecke categories. The argument is axiomatized to apply to various sheaf-theoretic and group-theoretic contexts. We also prove an extension of the Tao-Travkin equivalence for the whole affine Hecke category as a Morita equivalence.
\end{abstract}

\maketitle

\tableofcontents

%%%%%%%%%%%%%%%%%%%%%%%%%%%%%%%%%%%%%%%%%%%%%%%%%%%%

\section{Statements}

\subsection{Tao--Travkin equivalence}\label{ss:TT} 

Let $W$ be an irreducible affine Weyl group with its usual action on a real vector space $V$ by affine transformations. Let $A_{0}\subset V$ be an alcove stratified by facets. Each facet $F$ of $A_{0}$ has a pointwise stabilizer $W_{F}$ that is a finite Weyl group. The orbifold version of the van Kampen theorem tells us that the inclusions $W_{F}\subset W$ induce an isomorphism
\begin{equation*}
\colim^{\Gp}_{F: \textup{facets of }A_{0}}W_{F}\isom W
\end{equation*}
in the category of groups (in which colimits are denoted $\colim^{\Gp}$). One can also see this isomorphism easily from the Coxeter group presentation of $W$.  Similar isomorphisms can be easily proved when $W$ is replaced by its affine Hecke algebra. 

The remarkable paper of James Tao and Roman Travkin \cite{TT} proves an analogous statement one categorical level up, i.e., for affine Hecke categories instead of algebras. 

Let us review the Tao--Travkin equivalence in more detail. Let $G$ be a connected reductive algebraic group over an algebraically closed field $k$. Let $LG$ be the loop group of $G$ and $L^{+}G$ the arc group. Fix a Borel subgroup $B\subset G$ and let $\bI\subset L^{+}G$ be the Iwahori subgroup obtained as the preimage of $B$ under the reduction map $L^{+}G\to G$. We have the affine flag variety $\Fl=LG/\bI$. Let $\bI^{u}$ be the pro-unipotent radical of $\bI$, and $T=\bI/\bI^{u}$ the abstract Cartan of $G$. Let $W_{0}$ denote the Weyl group of $G$, $\tilW=\xcoch(T)\rtimes W_{0}$ the extended affine Weyl group, and $W\subset \tilW$ the affine Weyl group generated by simple reflections, the latter in bijection with the set $I$ of affine simple roots of $LG$ with respect to $\bI$. %Let $\tilW=\xcoch(T)\rtimes W$ be the extended affine Weyl group.

Let $\ell$ be a prime different from the characteristic of $k$. Consider the equivariant version of the affine Hecke category of $LG$
\begin{equation}\label{aff Hk}
\cH=\Sh(\bI\bs LG/\bI,\Qlbar).
\end{equation}
It is defined to be the colimit over $w\in \tilW$ of the categories $\cH_{\le w}=\Sh(\bI\bs \Fl_{\le w},\Qlbar)$ of ind-constructible $\Qlbar$-complexes supported on the Schubert varieties $\Fl_{\le w}=\ov{\bI w \bI}/\bI$. The category $\cH$ is equipped with the monoidal structure given by convolution.

A subset $J\subset I$ is called {\em finite type} if the corresponding subgroup $W_J\subset W$ generated by $J$ is finite. We use $J\sft I$ to emphasize that $J$ is of finite type. Let $\cD=\{J\sft I\}$, partially ordered by inclusion. For $J\sft I$, let $\bP_J$ be the corresponding standard parahoric subgroup of $LG$ with pro-unipotent radical $\bP_J^u$ and reductive quotient $L_J$, so that $J$ is identified with the set of simple roots of $L_{J}$ with respect to the Borel subgroup $B_{J}:=\bI/\bP_{J}^{u}$. %The image of $\bI$ under the projection $\bP_{J}\to L_{J}$ gives a Borel subgroup $B_{J}\subset L_{J}$, whose unipotent radical we denote by $N_{J}$.

For $J\sft I$, let $\cH_{J}$ be the Hecke category of $L_{J}$
\begin{equation*}
\cH_{J}=\Sh(B_{J}\bs L_{J}/B_{J},\Qlbar).
\end{equation*}
It can be identified with the monoidal full subcategory of $\cH$ of sheaves supported on $\bI\bs \bP_{J}/\bI$. Varying $J$ in $\cD$, we get a diagram of monoidal categories $\cH_{\bu}: \cD\to \Alg(\Pr^{L})$ which extends to a functor
\begin{equation}\label{Hrhd0}
\cH^{\rhd}_{\bu}: \cD^{\rhd}\to \Alg(\Pr^{L})
\end{equation}
by assigning to the cone point of $\cD^{\rhd}$ the category $\cH$.

\begin{theorem}[Tao--Travkin \cite{TT}]\label{th:TT} Assume $G$ is simply-connected. The diagram \eqref{Hrhd0} exhibits $\cH$ as the colimit $\colim^{\Alg}_{\cD}\cH_{\bu}$ in the category $\Alg(\Pr^{L})$ of monoidal presentable categories. 
%[Removed $E$-linearity since it seems not to be discussed anywhere]}
%$\Alg(\Pr^{L}_{E})$ of monoidal presentable $E$-linear categories.
\end{theorem}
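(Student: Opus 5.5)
The approach is to reduce the comparison of monoidal colimits to a comparison of modules over monoidal categories. For this I use the standard description of colimits in $\Alg(\Pr^{L})$. Put $\cA:=\colim^{\Alg}_{\cD}\cH_{\bu}$. A left module over $\cA$ is the same as an object $M\in\Pr^{L}$ with compatible actions of all $\cH_J$, $J\sft I$. I would then show two things: the canonical map $\cA\to\cH$ is essentially surjective on generators, and it is fully faithful.

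\textbf{Step 1 (generation).} Since $G$ is simply connected, $\tilW=W$ is a Coxeter group with simple reflections $I$. Each $\cH_{\le w}$ is generated under colimits by the standard objects $\Delta_w$, or by $\nabla_w$. For a reduced word $w=s_1\cdots s_n$ we have $\Delta_w\simeq\Delta_{s_1}\star\cdots\star\Delta_{s_n}$, and each $\Delta_{s_i}$ lies in $\cH_{\{s_i\}}$. Hence the image of $\cA\to\cH$ generates $\cH$ under colimits, and the problem reduces to computing mapping spaces between such products.

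\textbf{Step 2 (an explicit model for $\cA$).} I would build $\cA$ as a "Coxeter-type" monoidal category. Its objects are formal products $\Delta_{s_1}\star\cdots\star\Delta_{s_n}$. The relations are the braid isomorphisms between reduced words, coming from the finite-type $\cH_J$ with $|J|=2$, together with all the higher coherences coming from larger finite $J$. The key input is a categorical Matsumoto/Deligne theorem: the space of reduced expressions of $w$, modulo braid moves, is contractible once one fills in all cells indexed by finite parabolic subgroups. Deligne's result is the version for generalized braid monoids, and Tao--Travkin prove an analogous statement. From this, $\cA$ is generated by objects $\Delta_w$, $w\in W$, satisfying $\Delta_w\star\Delta_{w'}\simeq\Delta_{ww'}$ whenever lengths add, canonically and coherently.

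\textbf{Step 3 (full faithfulness).} The next task is to compute $\Hom_{\cA}(\Delta_w,\nabla_{w'}\star(-))$. I would filter $\cH$ by the closed unions $\Fl_{\le w}$, and filter $\cA$ compatibly by length using the Bruhat stratification. Each associated graded piece of $\cH$ is $\Sh(\bI\bs\bI w\bI/\bI)\simeq\Sh(B_T)$. The corresponding graded piece of $\cA$ can be computed inside a rank-one or finite $\cH_J$ by translating with $\Delta_w$. The gluing data are the recollement functors, given by $*$-restriction to strata, and they are determined locally by the finite Hecke categories $\cH_J$ for the $J$ that occur in a neighborhood of the relevant stratum. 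Comparing the two filtrations piece by piece then gives the equivalence.

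\textbf{Main obstacle.} I expect the hardest part to be making Step 2 and Step 3 coherent at the $\infty$-categorical level. One has to show that the colimit in $\Alg(\Pr^{L})$, which is very inexplicit, really is computed by this stratified model. To handle this I would first try to prove that the bar construction, i.e. the relative tensor products over the overlaps $\cH_{J\cap J'}$, realizes $\cH$ as the geometric realization of the diagram $J\mapsto \cH_J$. The geometric input for this is that the parahorics $\bP_J$ cover $LG$ in the sense of a van Kampen or descent statement. Concretely, this amounts to proving that the simplicial complex of facets of $A_0$, with stabilizers $W_F$, computes $W$ homotopically. That is the higher Deligne contractibility, and it holds because the Coxeter complex of an affine Weyl group is contractible.
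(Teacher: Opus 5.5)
\textbf{There is a genuine gap.} Steps 2 and 3 carry all the content of your proposal, but they are assertions rather than arguments. The ``main obstacle'' you name is essentially the theorem itself.

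\begin{itemize}
\item \textbf{No stratification on $\cA$.} A colimit $\cA=\colim^{\Alg}_{\cD}\cH_{\bu}$ in $\Alg(\Pr^{L})$ comes with no Bruhat filtration or recollement. So ``filter $\cA$ compatibly by length'' and ``compute its graded pieces by translating with $\Delta_w$'' already presuppose $\cA\simeq\cH$.
\item \textbf{Deligne/Matsumoto only controls objects.} Contractibility of reduced-expression complexes makes the objects $\Delta_w\in\cA$ and the isomorphisms $\Delta_w\star\Delta_{w'}\simeq\Delta_{ww'}$ coherently well defined. It gives no control of mapping spaces in $\cA$. Those receive contributions from all of each $\cH_J$, not only braid isomorphisms, and from arbitrary alternating products. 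Full faithfulness is precisely a statement about those mapping spaces.
\item \textbf{The fallback does not repair this.} The underlying category of a $\cD$-indexed colimit of monoidal categories is not computed by relative tensor products over the overlaps $\cH_{J\cap J'}$. The forgetful functor $\Alg(\Pr^{L})\to\Pr^{L}$ preserves sifted colimits, but not colimits of shape $\cD$ in general.
\item \textbf{Wrong contractibility input.} Global contractibility of the Coxeter complex is not what actually drives the proof.
\end{itemize}

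\textbf{The missing idea} is to use the reduction you state in your first paragraph and then never analyze $\cA$ directly.
\begin{itemize}
\item Since $\cD$ has the minimal element $\vn$, it is contractible, and passing to module categories turns this colimit into a limit: $\cA\lmod\simeq\lim_{J\in\cD^{\opp}}\cH_J\lmod$.
\item So it suffices that restriction $\r\colon\cH\lmod\to\lim_{J}\cH_J\lmod$ is an equivalence. Then $T^{*}$ is an equivalence, and evaluating the unit of $T_{*}\dashv T^{*}$ on the regular module shows that $T\colon\cA\to\cH$ is one.
\item As $\r$ is conservative, you only need the unit $\cN\to\colim_{J\in\cD}\cH\ot_{\cH_J}\cN$ to be an equivalence. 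This is a computation on the $\cH$ side only.
\end{itemize}

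For that computation:
\begin{itemize}
\item For finite down-closed $Y\subset W$, put $\cM[Y]=\colim_{J}\cH_{YW_J}\ot_{\cH_J}\cN$. Then $\cM[\{e\}]=\cN$ and $\colim_Y\cM[Y]$ is the target.
\item It suffices to show that adjoining a single $w$ to $Y$ does not change $\cM[Y]$.
\item For $J$ meeting the right descent set $R_w$, one has $YW_J=(Y\cup\{w\})W_J$.
\item For $J$ disjoint from $R_w$, $w$ is minimal in $wW_J$. The freeness $\cH_w\ot_{\cH_{\vn}}\cH_J\simeq\cH_{wW_J}$ then makes the square over $\vn\subset J$ a pushout. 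This freeness comes from the pro-unipotent gerbe $\bI\bs\bI w\bP_J/\bI\to B_J\bs L_J/B_J$.
\item The poset $\{J\sft I: J\cap R_w\neq\vn\}$ is contractible. This is because $R_w$ is of finite type, and Quillen's Theorem A applies to the map onto the simplex of nonempty subsets of $R_w$.
\end{itemize}

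These two local facts are what your Steps 2--3 would need and do not supply: freeness over minimal coset representatives, and contractibility of the descent-set poset.
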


\subsection{Main result}
\sss{Goal and motivation}
In this paper we give a simpler proof of Theorem \ref{th:TT}, which allows several useful generalizations with little to no extra effort. The argument we give for Theorem \ref{th:TT} works for any connected Kac-Moody group in any usual sheaf-theoretic context. Below we axiomatize the structures and properties of $\cH$ that we need in the proof.

The core of the argument here is in fact a special case of the much more general Localization Theorem about categorical representations of loop groups that will appear in the work of Dhillon--Varshavsky--Yang \cite{DVY}. We decided to write this short paper just for the affine Hecke category to illustrate the key idea. 

On the other hand, variants of the Tao--Travkin theorem, mainly involving non-simply-connected groups, will be used in the forthcoming work of three of us \cite{LNY-Coxeter, LNY-Ell} on the elliptic realization of affine character sheaves. We record these variants in this paper as well.

\sss{Axiomatic setting}\label{sss:abs}
Let $(W,I)$ be a Coxeter group, equipped with the Bruhat partial order. We continue to use notations introduced in \S\ref{ss:TT} regarding Coxeter groups. In particular, $\cD$ denotes the set of finite type subsets of $I$, partially ordered by inclusion.

Let $\cH$ be a monoidal stable presentable $\infty$-category with a recollement structure (see \cite[Def.2.4.3]{AMR} under the name {\em stratification}; see also \cite[A.1.2]{LNY}) indexed by the poset $W$. The monoidal structure and the recollement structure on $\cH$ will need to satisfy two compatibility conditions that we now formulate.

For any down-closed subset $Y\subset W$ (i.e., $y\in Y$ and $y'\le y$ implies $y'\in Y$), we let $\cH_{Y}$ denote the ``closed'' subcategory of $\cH$ under the recollement structure corresponding to $Y$. More generally, for any locally closed subset $Y\subset W$ (those of the form $Y_{2}-Y_{1}$ where $Y_{1}\subset Y_{2}$ are down-closed), let $\cH_{Y}=\cH_{Y_{2}}/\cH_{Y_{1}}$, which is independent of the presentation of $Y$ as the difference of two down-closed subsets. In particular for any $w\in W$, $\cH_{w}$ is well-defined.

We assume the following:
\begin{enumerate}
\item For each $J\sft I$ and $Y\subset W$ a down-closed union of right $W_{J}$-cosets, $\cH_{Y}$ is closed under right multiplication by $\cH_{J}:=\cH_{W_{J}}$. In particular, $\cH_{J}$ is a monoidal full subcategory of $\cH$.

\item\label{Hw free} For each $J\sft I$ and $w\in W$ minimal in $wW_{J}$ (so that $\cH_{w}\subset \cH_{wW_{J}}$), the map the right $\cH_{J}$-modules
\begin{equation*}
\cH_{w}\ot_{\cH_{\vn}}\cH_{J}\to \cH_{wW_{J}}
\end{equation*}
is an equivalence.

\end{enumerate}

Varying $J$ in $\cD$, we get a diagram of monoidal categories $\cH_{\bu}: \cD\to \Alg(\Pr^{L})$ which extends to a functor
\begin{equation}\label{Hrhd0.5}
\cH^{\rhd}_{\bu}: \cD^{\rhd}\to \Alg(\Pr^{L})
\end{equation}
by assigning to the cone point of $\cD^{\rhd}$ the category $\cH$. 

\begin{theorem}\label{th:abs} Under the above assumptions, the diagram \eqref{Hrhd0.5} exhibits $\cH$ as the colimit $\colim^{\Alg}_{\cD}\cH_{\bu}$ in the category $\Alg(\Pr^{L})$ of monoidal presentable categories.
\end{theorem}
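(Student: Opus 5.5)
We will reduce the monoidal colimit to a statement about module categories. Set $\cA:=\colim^{\Alg}_{\cD}\cH_{\bu}$ with its canonical monoidal functor $\Phi:\cA\to\cH$; the goal is to show $\Phi$ is an equivalence. We first check that $\cD$ is weakly contractible. Indeed, it has the initial object $\vn$, and the subsets $J\sft I$ are closed under passing to subsets. Consequently, a monoidal colimit over $\cD$ can be computed through modules. Concretely, $\cA$-modules in $\Pr^L$ are the same as compatible systems: a category $\cM$ with commuting $\cH_J$-actions that are compatible along the inclusions in $\cD$. So it suffices to show two things. First, $\cH$, viewed as a left module over itself, corepresents this structure; equivalently, for every compatible system $\cM$ the restriction map $\Hom_{\cH}(\cH,\cM)\to\lim_{J\in\cD}\Hom_{\cH_J}(\cH_J,\cM)$ is an equivalence. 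Second, $\Phi$ induces an equivalence on the underlying objects, which we obtain from the bar-type description of the colimit in $\cA$. The most concrete version, and the one I would commit to, is the following. Since the unit $\cH_{\vn}=\cH_e$ is common to all $\cH_J$, the underlying category of $\cA$ is the colimit in $\Pr^L$ of iterated relative tensor products $\cH_{J_1}\ot_{\cH_{\vn}}\cdots\ot_{\cH_{\vn}}\cH_{J_n}$ over strings in $\cD$, glued in the style of a free product with amalgamation. We then prove that $\cH$ is this colimit by using the recollement on $\cH$ to filter both sides.

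The key steps, in order, are as follows. (a) For each down-closed $Y\subset W$, define the subcategory $\cA_Y\subset\cA$ generated by the images of $\cH_{J_1}\ot\cdots\ot\cH_{J_n}$ supported on products landing in $Y$. Then show that $\Phi$ carries $\cA_Y$ into $\cH_Y$. (b) Argue by induction on $Y$, adding one element $w$ at a time to pass from $Y'$ to $Y=Y'\cup\{w\}$. It then suffices to show that the induced functor on associated graded pieces, $\cA_Y/\cA_{Y'}\to\cH_w$, is an equivalence; a separate exhaustion argument shows that $\colim_Y\cA_Y=\cA$ and $\colim_Y\cH_Y=\cH$. (c) For $w\ne e$, pick $s\in I$ with $ws<w$, and set $J=\{s\}$ and $v=ws$. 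By assumption (2), we have $\cH_{vW_J}\simeq\cH_v\ot_{\cH_\vn}\cH_J$, and the stratum $\cH_w$ is the quotient $\cH_v\ot_{\cH_\vn}\cH_{\{s\}\text{-open}}$. Use this to define an inverse functor $\cH_w\to\cA_Y/\cA_{Y'}$ that sends an object to the product of a lift from $\cH_v$ (available by induction) with an object of $\cH_s$. (d) Check that this inverse is independent of the choice of reduced expression. Two choices of $s$ with $ws<w$ and $ws'<w$ generate a finite parabolic $W_{\{s,s'\}}$, because the longest element of $W_{\{s,s'\}}$ lies in $w\cdot(\cdot)$ with $w$ of maximal length in its coset. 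So the two resulting lifts are identified inside the single factor $\cH_{\{s,s'\}}$ of the colimit, again by assumption (2) applied with $J=\{s,s'\}$. This is Matsumoto/Tits in categorical form: braid relations are imposed precisely by the rank-2 finite parabolics, which belong to $\cD$. The required higher coherences come from the larger finite type $J$, using that the poset of $J$ with $w$ minimal on the right...

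I expect step (d) to be the main obstacle: organizing these coherences homotopy-coherently rather than just up to isomorphism. My first attempt would avoid choosing expressions altogether. Instead, for each $w$ I would consider the full subposet $\cD_w=\{J\in\cD: w \text{ is maximal in } wW_J\}$ and show that the stratum of $\cA$ at $w$ is $\colim_{J\in\cD_w}\cH_{w_J}\ot_{\cH_\vn}(\cH_J)_{\text{open}}$, where $w_J$ denotes the minimal coset representative. Assumption (2) identifies each term of this colimit with $\cH_w$ via transition maps that are equivalences. Then, provided $\cD_w$ is nonempty and weakly contractible, the colimit is $\cH_w$. Contractibility holds because $\cD_w$ is the poset of nonempty subsets of the descent set $\{s: ws<w\}$, and the descent set is itself of finite type; so $\cD_w$ has a maximal element. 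This is the standard fact that the right descent set of any element generates a finite parabolic, whose longest element is a right factor of $w$.
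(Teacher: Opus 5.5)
\textbf{There is a genuine gap: the object you filter, the underlying category of $\cA$, is never given a usable model, and the core identification is asserted rather than proved.}

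Steps (a)--(d) and your $\cD_{w}$ variant all take place inside the underlying category of $\cA=\colim^{\Alg}_{\cD}\cH_{\bu}$. The poset $\cD$ is weakly contractible but not sifted. For affine $A_{1}$ it is the span $\{s_{0}\}\leftarrow\vn\to\{s_{1}\}$, so $\cA$ is an amalgamated free product, and it is not computed in $\Pr^{L}$.

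Your normal form by words $\cH_{J_1}\ot_{\cH_\vn}\cdots\ot_{\cH_\vn}\cH_{J_n}$ would need $\cH_{\vn}\to\cH_{J}$ to split off a complementary summand as $\cH_{\vn}$-bimodules. But the recollement of $\cH_{J}$ along $\{e\}$ is a gluing in $\Pr^{L}$, not a direct sum, and you give no substitute. As a result the following are all undefined or unproved:
\begin{itemize}
\item the subcategories $\cA_{Y}$ and ``the stratum of $\cA$ at $w$'';
\item the exhaustion $\colim_{Y}\cA_{Y}=\cA$;
\item compatibility of the filtration with right multiplication by $\cH_{\{s\}}$, which is needed to make the inverse in (c) a functor rather than an objectwise recipe.
\end{itemize}

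Even granting a filtration, you only assert that the stratum of $\cA$ at $w$ is $\colim_{J\in\cD_{w}}\cH_{w_J}\ot_{\cH_\vn}(\cH_{J})_{\mathrm{open}}$. Assumption (2) identifies each term with $\cH_{w}$ inside $\cH$, but says nothing about $\cA$. That identification is essentially the whole theorem, and it is exactly the coherence problem you flag in (d) and leave open.

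Your ``first'' criterion is also vacuous. For an $\cH$-module $\cM$, both $\Hom_{\cH}(\cH,\cM)$ and $\Hom_{\cH_{J}}(\cH_{J},\cM)$ are just $\cM$. What has to be tested is compatible systems $\cN$ that are not a priori $\cH$-modules.

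\textbf{The missing idea is to use the reduction you mention and then abandon.} Since $\cD$ has the minimal element $\vn$, \cite[Cor.~4.8.5.13]{HA} gives $\cA\lmod\simeq\lim_{J\in\cD^{\opp}}\cH_{J}\lmod$. So it suffices to show that restriction $\rho:\cH\lmod\to\lim_{J\in\cD^{\opp}}\cH_{J}\lmod$ is an equivalence. Then $\Phi^{*}$ is an equivalence, and its unit at the regular module is $\Phi$. Since $\rho$ is conservative, it is enough that for every compatible system $\cN$ the unit $\cN\to\colim_{J\in\cD}\cH\ot_{\cH_{J}}\cN$ is an equivalence.

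Now the filtration lives on the known object $\cH$. Set $\cM[Y]=\colim_{J}\cH_{YW_{J}}\ot_{\cH_{J}}\cN$ for finite down-closed $Y$, and add one element $w$ at a time.
\begin{itemize}
\item If $J\cap R_{w}\ne\vn$, then $YW_{J}=(Y\cup\{w\})W_{J}$, so nothing changes at $J$.
\item If $J\cap R_{w}=\vn$, then $w$ is minimal in $wW_{J}$. Assumption (2) together with the recollement makes the square comparing $J$ with $\vn$ a pushout.
\end{itemize}
The poset $\{J\in\cD: J\cap R_{w}\ne\vn\}$ deformation retracts, via $J\mapsto J\cap R_{w}$, onto the nonempty subsets of the finite-type set $R_{w}$, so it is contractible. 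A weak elementary expansion argument \cite[Prop.~A.4.6]{LNY} then shows $\cM[Y]\to\cM[Y\cup\{w\}]$ is an equivalence.

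All coherences are automatic here, because everything is a colimit of genuine functors in $\Pr^{L}$. The descent-set finiteness you isolate is the right combinatorial input. However, it enters through the $J$ for which $w$ is \emph{minimal} in $wW_{J}$, not maximal.
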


%By sheaves we mean \Yun{add convention; may use abstract 6 functor formalism}. The coefficient field \Yun{or coefficient ring?} is denoted by $E$.

\subsection{Application: connected Kac-Moody groups}
Let $\cG$ be a connected Kac-Moody group over an algebraically closed field $k$. See \cite{Kumar}, \cite{Mat}. Let $\cB$ be a Borel subgroup of $\cG$ and consider the flag variety $\Fl_{\cG}:=\cG/\cB$ of $\cG$, an ind-scheme over $k$. The affine Hecke category $\cH$ of $\cG$ is defined to be the stable $\infty$-category of (\'etale) $\Qlbar$-sheaves on $\Fl_{\cG}$ with left equivariant structures under $\cB$.  The %$E$-linear 
stable $\infty$-category $\cH$ carries a monoidal structure under the convolution product.

Let $W$ be the Weyl group of $\cG$ with respect to a maximal torus $\cT\subset \cB$, with simple reflections in bijection with the set $I$ of simple roots of $\cG$ with respect to $\cB$. For each $i\in I$, let $s_{i}\in W$ be the corresponding simple reflection. Corresponding to $J\sft I$ we have a parabolic subgroup $\cP_{J}\subset\cG$ containing $\cB$, whose Levi quotient $L_{J}$ is a connected reductive group over $k$. The subvariety $\cP_{J}/\cB\subset \Fl_{\cG}$ canonically isomorphic to the flag variety of $L_{J}$. Let $\cH_{J}\subset \cH$ be the full subcategory of sheaves supported on $\cP_{J}/\cB$, which is closed under convolution and can be identified with the Hecke category of $L_{J}$ as a monoidal category. Varying $J$ in $\cD$, we get a diagram of monoidal categories $\cH_{\bu}: \cD\to \Alg(\Pr^{L})$ which extends to a functor
\begin{equation}\label{Hrhd}
\cH^{\rhd}_{\bu}: \cD^{\rhd}\to \Alg(\Pr^{L})
\end{equation}
by assigning to the cone point of $\cD^{\rhd}$ the category $\cH$. Applying Theorem \ref{th:abs} to this situation, we obtain:

\begin{theorem}\label{th:KM} Let $\cG$ be a connected Kac-Moody group over $k$ and $\cH$ be its Hecke category using $\Qlbar$-sheaves. The diagram $\cH^{\rhd}_{\bu}$ induces an equivalence of monoidal categories
\begin{equation*}
\colim^{\Alg}_{\cD}\cH_{\bu}\isom \cH.
\end{equation*}
\end{theorem}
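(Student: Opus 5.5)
Theorem \ref{th:KM} will follow from Theorem \ref{th:abs} once we check that $\cH$ has a recollement structure indexed by $W$ satisfying assumptions (1) and (2). The stratification comes from the Bruhat decomposition $\Fl_{\cG}=\coprod_{w\in W}\cB w\cB/\cB$. The closure of $\cB w\cB/\cB$ is the union of the cells $\cB y\cB/\cB$ with $y\le w$, so down-closed subsets $Y\subset W$ correspond to closed $\cB$-stable sub-ind-schemes $\Fl_{Y}=\bigcup_{y\in Y}\cB y\cB/\cB$. We put $\cH_{Y}:=\Sh(\cB\bs\Fl_{Y})$, embedded in $\cH$ via $i_{*}$.

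The recollement on each finite union of Schubert varieties is the standard open-closed gluing for constructible sheaves, given by the functors $i^{*},i_{*},i^{!},j^{*},j_{*},j_{!}$. Since $\cH$ is the colimit over finite down-closed $Y$, these recollements assemble to one indexed by the poset $W$; here we invoke \cite[A.1.2]{LNY}. Concretely, for a locally closed $Y$ the quotient $\cH_{Y}$ is identified with $\Sh(\cB\bs\Fl_{Y})$ for the locally closed sub-ind-scheme $\Fl_{Y}$. In particular,
\[
\cH_{w}\simeq\Sh(\cB\bs\cB w\cB/\cB)\simeq\Sh(\mathrm{pt}/\cT),
\]
since the stabilizer of the point $w\cB$ in $\cB$ is $\cB\cap w\cB w^{-1}$, which contracts to $\cT$. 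Thus $\cH_{\vn}\simeq\Sh(\cB\bs\cB/\cB)=\Sh(\mathrm{pt}/\cB)\simeq\Sh(\mathrm{pt}/\cT)$ is the monoidal unit piece.

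For (1), let $Y$ be a down-closed union of right $W_{J}$-cosets. Then $\Fl_{Y}$ is the preimage of a closed subset of $\cG/\cP_{J}$ under the projection $\Fl_{\cG}\to\cG/\cP_{J}$, so it is right $\cP_{J}$-stable. Convolution with a sheaf supported on $\cP_{J}/\cB$ is computed on
\[
\Fl_{Y}\tilde\times(\cP_{J}/\cB)=\Fl_{Y}\times^{\cB}\cP_{J}/\cB,
\]
and the multiplication map sends this into $\Fl_{Y}\cP_{J}/\cB=\Fl_{Y}$. Hence $\cH_{Y}\star\cH_{J}\subset\cH_{Y}$.

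For (2), take $w$ minimal in $wW_{J}$. Then the multiplication map
\[
\cB w\cB\times^{\cB}\cP_{J}\to\cB w\cP_{J}
\]
is an isomorphism, because lengths add for $w$ minimal. Hence
\[
\cB\bs\cB w\cP_{J}/\cB\simeq(\cB\bs\cB w\cB/\cB)\times_{\mathrm{pt}/\cB}(\cB\bs\cP_{J}/\cB).
\]
Taking sheaves, the left side is $\cH_{wW_{J}}$. The right side, by a Künneth formula for sheaves on this fiber product of stacks, becomes $\cH_{w}\otimes_{\cH_{\vn}}\cH_{J}$, with $\cH_{\vn}=\Sh(\mathrm{pt}/\cB)$ acting via the equivariance.

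This Künneth step is the main obstacle. It is a statement about ind-constructible $\Qlbar$-sheaves on stacks with a relative tensor product over $\Sh(\mathrm{pt}/\cT)$. I would reduce it by noting that $\cB w\cB/\cB\simeq\mathbb{A}^{\ell(w)}$ with $\cB$ acting through a unipotent group with quotient $\cT$. Then $\cH_{w}\simeq\Sh(\mathrm{pt}/\cT)$ is free of rank one over $\cH_{\vn}$. The map $\cH_{w}\otimes_{\cH_{\vn}}\cH_{J}\to\cH_{wW_{J}}$ then reduces to the statement that $\Sh(\cB\bs\cB w\cP_{J}/\cB)\simeq\Sh(\cB\bs\cP_{J}/\cB)$ via pullback along an affine-space bundle. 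That is homotopy invariance, valid because unipotent groups and affine spaces are contractible for étale sheaves. One should also check that the identifications are compatible with the right $\cH_{J}$-actions. With (1) and (2) verified, Theorem \ref{th:abs} applies and gives the equivalence.
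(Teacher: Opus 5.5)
Your proposal is correct and follows the paper's route: Theorem~\ref{th:KM} is deduced from Theorem~\ref{th:abs}, and condition (2) is checked as in Lemma~\ref{l:pb}, by identifying $\cH_{w}$ with $\cH_{\vn}$ as a right $\cH_{\vn}$-module and $\cH_{wW_{J}}$ with $\cH_{J}$ via pullback along $\cB\bs\cB w\cP_{J}/\cB\to\cB\bs\cP_{J}/\cB$, which makes the K\"unneth detour unnecessary. One small correction: for ind-constructible \'etale sheaves, $\mathbb{A}^{n}$-homotopy invariance alone only makes that pullback fully faithful, so essential surjectivity has to come from the equivariance, as in the paper, where both $\cB\bs\cB w\cP_{J}/\cB$ and $\cB\bs\cP_{J}/\cB$ map to $B_{J}\bs L_{J}/B_{J}$ by gerbes for pro-unipotent groups.
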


\begin{remark}
The above setting includes the neutral component $\cG=(LG)^{\c}$ of the loop group  considered in \S\ref{ss:TT} for any reductive $G$. It also includes the neutral components of twisted loop groups, i.e., loop groups of quasi-split reductive groups over $k\lr{t}$.
\end{remark}

\subsection{Extended Hecke categories}\label{sss:ext}
While the setting of \S\ref{sss:abs} is suitable for Hecke categories for connected Kac-Moody groups, in applications we often encounter disconnected ones, such as the loop group of a general reductive group over $k\lr{t}$. To include those, we extend the setup of \S\ref{sss:abs} as follows. 

Let $(W,I)$ still denote a Coxeter group, and keep all the relevant notations. Let $\Om$ be a group acting on $(W,I)$ (i.e., acting on $W$ permuting simple reflections). Let $\tilW=W\rtimes \Om$. Extend the Bruhat partial order on $W$ to $\tilW$ by declaring $w_{1}\om_{1}\le w_{2}\om_{2}$ (where $w_{i}\in W, \om_{i}\in \Om$) if and only if $w_{1}\le w_{2}$ and $\om_{1}=\om_{2}$.

Let $\wt\cH$ be a monoidal stable presentable $\infty$-category with a recollement structure indexed by the poset $W$. We again use the notation $\wt\cH_{Y}$ for any locally closed subset $Y\subset \tilW$. Let $\cH=\wt\cH_{W}$. If $Y\subset W$, both $\cH_{Y}$ and $\wt\cH_{Y}$ are defined and they agree. 

In addition to the two conditions in \S\ref{sss:abs} (which only involves $W$ and $\cH$), we impose further that
\begin{enumerate}
\item[(3)] For each $\om\in \Om$, the functor 
\begin{equation}\label{right om whole}
\wt\cH\ot_{\cH_{\vn}}\wt\cH_{\om}\to \wt\cH.
\end{equation}
induced by convolution is an equivalence, and it respects the recollement structures in the sense that for any down-closed $Y\subset \tilW$, \eqref{right om whole} restricts to an equivalence
\begin{equation}\label{right om}
\wt\cH_{Y}\ot_{\cH_{\vn}}\wt\cH_{\om}\to \wt\cH_{Y\om}.
\end{equation}
This then induces an equivalence of the form \eqref{right om} for any locally closed $Y\subset \tilW$.
%The same is true for the right convolution
%\begin{equation}\label{left om}
%\wt\cH_{\om}\ot_{\cH_{\vn}}\wt\cH\to \wt\cH
%\end{equation}
\end{enumerate}

Consider the category $\cD/\Om$: its objects are the same as that of $\cD$, i.e., $J\sft I$. The morphisms are defined as
\begin{equation*}
\Hom_{\cD/\Om}(J_{1},J_{2})=\{\om \in\Om|\om(J_{1})\subset J_{2}\}.
\end{equation*}

Let $\Morita(\Pr^{L})$ be the $(\infty,1)$-category where objects are the same as $\Alg(\Pr^{L})$ but morphisms are $\infty$-groupoids of bimodules.

\begin{theorem}\label{th:abs Om} Under the above assumptions, there is a canonical functor
\begin{equation}\label{H Morita}
\wt\cH^{\Morita,\rhd}_{\bu}: (\cD/\Om)^{\rhd}\to \Morita(\Pr^{L}).
\end{equation}
extending $\cH_{\bu}$ and sending the cone vertex to $\wt\cH$. The restriction functor
\begin{equation*}
\rho: \wt\cH\lmod\to \lim_{J\in (\cD/\Om)^{\opp}}\cH_{J}\lmod
\end{equation*}
induced by $\wt\cH^{\Morita,\rhd}_{\bu}$ is an equivalence. \end{theorem}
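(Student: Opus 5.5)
We plan to reduce Theorem~\ref{th:abs Om} to Theorem~\ref{th:abs} by viewing $\wt\cH$ as a ``crossed product'' of $\cH$ by $\Om$.

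\textbf{Step 1: construct the functor \eqref{H Morita}.} On objects it is $J\mapsto \cH_J$, and the cone vertex goes to $\wt\cH$. A morphism $\om: J_1\to J_2$ in $\cD/\Om$ is sent to the $(\cH_{J_2},\cH_{J_1})$-bimodule $\wt\cH_{W_{J_2}\om}$. By condition (3), right convolution with $\wt\cH_\om$ sends $\cH_{J_1}$ into $\wt\cH_{W_{J_1}\om}$. Since $\om W_{J_1}\om^{-1}=W_{\om(J_1)}\subset W_{J_2}$, the left $\cH_{J_2}$-action together with this right $\cH_{J_1}$-action makes $\wt\cH_{W_{J_2}\om}$ a bimodule. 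For $\om=1$ this recovers the bimodule $\cH_{J_2}$ induced by the inclusion $\cH_{J_1}\subset\cH_{J_2}$, so the functor extends $\cH_\bu$. The maps to the cone point are $\wt\cH$ viewed as an $(\wt\cH,\cH_J)$-bimodule. For the composition, $\wt\cH_{W_{J_3}\om'}\ot_{\cH_{J_2}}\wt\cH_{W_{J_2}\om}\simeq \wt\cH_{W_{J_3}\om'\om}$ follows from (3) together with $\wt\cH_{W_{J}\om}\simeq \cH_{J}\ot_{\cH_\vn}\wt\cH_\om$. The higher coherences are the delicate part. To handle them, we first build a functor $\cD/\Om\to\Alg(\Pr^L)_{\text{lax}}$ via the $\Om$-graded monoidal category $\bigoplus_{\om}\wt\cH_{\om}$ acting on the diagram. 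Equivalently, we use the Grothendieck construction: $\cD/\Om$ is the quotient of $\cD$ by the free $\Om$-action, and the bimodules arise from the $\Om$-equivariance data that (3) gives on $\cH_\bu$.

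\textbf{Step 2: reduce the limit over $\cD/\Om$ to $\cD$.} The projection $\cD\to\cD/\Om$ identifies $\lim_{(\cD/\Om)^{\opp}}\cH_J\lmod$ with the $\Om$-equivariant objects, i.e.\ homotopy fixed points, of $\lim_{\cD^{\opp}}\cH_J\lmod$ under the $\Om$-action induced by the twisting bimodules $\wt\cH_\om$. By Theorem~\ref{th:abs}, $\lim_{\cD^{\opp}}\cH_J\lmod\simeq(\colim^{\Alg}_\cD\cH_\bu)\lmod\simeq\cH\lmod$. Here we use that modules over a colimit of algebras in $\Pr^L$ form the limit of the module categories, since restriction along the colimit cone is the universal property.

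\textbf{Step 3: identify $\wt\cH\lmod$ with $\Om$-equivariant $\cH$-modules.} Condition (3) gives $\wt\cH\simeq\bigoplus_{\om}\wt\cH_{W\om}\simeq\bigoplus_\om \cH\ot_{\cH_\vn}\wt\cH_\om$, where each $\wt\cH_\om$ is an invertible $\cH_\vn$-bimodule. Hence $\wt\cH$ is an $\Om$-graded algebra whose graded pieces are invertible $\cH$-bimodules $\wt\cH_{W\om}$, i.e.\ a twisted crossed product $\cH\rtimes\Om$. For such a strongly graded algebra, $\wt\cH\lmod\simeq(\cH\lmod)^{\Om}$: the module $M$ goes to $\res M$ with the isomorphisms $\wt\cH_{W\om}\ot_\cH M\isom M$. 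Matching this with Step 2 shows that $\rho$ is an equivalence.

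The main obstacle is Step 1 together with the compatibility in Step 2, that is, producing the functor into $\Morita(\Pr^L)$ with all coherences and checking that the induced $\Om$-actions agree. We will do this systematically by treating $\wt\cH$ as an algebra in $\Om$-graded $\cH_\vn$-bimodules and deducing everything from strong gradedness.
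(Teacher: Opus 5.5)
Your proposal is correct and follows the paper's proof. You factor $\rho$ through $(\cH\lmod)^{\Om}$. For the second map you use the $\Om$-equivariant form of $\cH\lmod\simeq\lim_{\cD^{\opp}}\cH_{J}\lmod$; this is the paper's $\a$, where compatibility of the two $\Om$-actions comes from extending the diagram to $(\cD^{\rhd}/\Om)^{\rhd}$. For the first map you use the strongly graded decomposition $\wt\cH=\bigoplus_{\om}\wt\cH_{W\om}$ into invertible pieces; this is the paper's $\b$, proved there via Barr--Beck--Lurie.

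Minor points: in Step 2 you can cite Corollary~\ref{c:rho equiv} directly instead of detouring through Theorem~\ref{th:abs}. As you justify it, that detour is imprecise: modules over a colimit of algebras form the limit of module categories only for weakly contractible index diagrams (true here since $\vn$ is minimal in $\cD$), not by the universal property alone. Also, $\Om$ need not act freely on $\cD$. The category $\cD/\Om$ is the action category, i.e.\ the homotopy quotient, and that is exactly what your use of homotopy fixed points requires.
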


\begin{remark}\label{r:colim Om} As usual, one can turn the limit in Theorem \ref{th:abs Om} into a colimit by passing to left adjoints (i.e., induction functors) to obtain an equivalence
\begin{equation*}
\colim_{J\in \cD/\Om}\cH_{J}\lmod\isom \wt\cH\lmod
\end{equation*}
where the arrow is induced by the induction functors $\wt\cH\ot_{\cH_{J}}(-): \cH_{J}\lmod\to \wt\cH\lmod$.
\end{remark}

\begin{remark} 
When $\Om=1$ hence $\tilW=W$ is a Coxeter group, Theorem \ref{th:abs} and Theorem \ref{th:abs Om} have the same content; in fact, the first step of the proof of Theorem \ref{th:abs} given in \S\ref{ss:red} is to reduce  it to the corresponding case of 
Theorem \ref{th:abs Om}.
\end{remark}

\subsection{Application: disconnected twisted loop groups}\label{ss:disc}

%For applications to loop groups, it is desirable to have a version of the Tao-Travkin equivalence that allows the loop group $LG$ to be disconnected, i.e., dropping the simply-connectedness assumption on $G$. In this case, we do not know a presentation of $\cH$ as a colimit of monoidal categories in the style of Tao-Travkin; what we offer is a Morita equivalence, i.e., a presentation of the category $\cH\lmod$ as a colimit/limit of module categories over finite Hecke categories.

\sss{Setup}\label{sss:tw loop group}
It would also be useful to include twisted loop groups, i.e., those coming from non-split groups over $k\lr{t}$. We therefore extend the setup of \S\ref{ss:TT} as follows. A systematic discussion of twisted loop groups can be found in \cite{PR}. In this subsection $G$ denotes a connected reductive group over the field $F=k\lr{t}$ that splits over a tamely ramified extension (so the setup of \S\ref{ss:TT} corresponds to split $G$). Let $LG$ be the loop space of $G$ and $\bI$ be an Iwahori subgroup. Let $\tilW$ and $W$ continue to denote the extended affine Weyl group and the affine Weyl group of $LG$. Then $\tilW$ indexes the double cosets $\bI\bs LG/\bI$ while $W$ indexes those contained in the neutral component of $LG$. The affine Weyl group $W$ is a Coxeter group with simple reflections $I$. The quotient $\Om=\tilW/W$ is the inertia covariants of the algebraic fundamental group $\pi_{1}^{\alg}(G_{\ov F})$, and there is a canonical section $\Om\incl  \tilW$ whose image consists of elements that normalize $\bI$ (i.e., length zero elements in $\tilW$). We identify $\Om$ with its image under the canonical embedding and hence write $\tilW=W\rtimes \Om$. The action of $\Om$ on $W$ permutes the set of simple reflections $I$. 

Let $\wt\cH$ be the (extended) affine Hecke category of $LG$, as defined by the right side of \eqref{aff Hk}. The $\bI$-double cosets on $LG$ are indexed by $\tilW$, giving $\wt\cH$ the recollement structure indexed by $\tilW$. Let $\cH=\wt\cH_{W}$ consist of those sheaves in $\wt\cH$ supported on the neutral component of $LG$. Then $\cH$ satisfies the axioms in \S\ref{sss:abs}, and $\wt\cH$ satisfies the extra condition satisfying the conditions in \S\ref{sss:ext}.

Theorem \eqref{th:abs Om} applies to $\wt\cH$ then gives a functor 
\begin{equation*}
\cH^{\Morita,\rhd}_{\bu}: (\cD/\Om)^{\rhd}\to \Morita(\Pr^{L})
\end{equation*}
sending $J$ to $\cH_{J}\lmod$ and the cone vertex to $\wt\cH\lmod$. In this case, Theorem \eqref{th:abs Om} gives the following.
%The construction of this functor will be carried out in \S\ref{ss:Morita}. Note however that $\cH_{\bu}$ does not in general extend to a functor from $(\cD/\Om)^{\rhd}$ to $\Alg(\Pr^{L})$. 

\begin{theorem}\label{th:Om} Let $G$ be any connected reductive group over $k$ with affine Hecke category $\cH$. The restriction functor 
\begin{equation*}
\rho: \cH\lmod\to \lim_{J\in (\cD/\Om)^{\opp}}\cH_{J}\lmod
\end{equation*}
induced by $\cH^{\Morita,\rhd}_{\bu}$ is an equivalence.
\end{theorem}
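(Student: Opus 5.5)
Theorem \ref{th:Om} is obtained by specializing Theorem \ref{th:abs Om}, so the plan is to verify its hypotheses for the extended affine Hecke category $\wt\cH=\Sh(\bI\bs LG/\bI,\Qlbar)$ of the twisted loop group. One small point first: as written, the statement puts $\cH\lmod$ where the theorem needs $\wt\cH\lmod$, and in \S\ref{ss:disc} the notation $\cH$ was overloaded. We read the statement as being about the full extended category $\wt\cH$ indexed by $\tilW=W\rtimes\Om$, which is what Theorem \ref{th:abs Om} concerns.

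\emph{Recollement and condition (1).} The stratification of $LG$ by $\bI$-double cosets $\bI w\bI$, $w\in\tilW$, is closure-ordered by the extended Bruhat order. Hence $!$/$*$-extensions along the closed unions $\Fl_{\le Y}$ give a recollement of $\wt\cH$ indexed by $\tilW$, with $\wt\cH_w\simeq\Sh(\bI\bs\bI w\bI)$. For $Y$ a down-closed union of right $W_J$-cosets, the preimage of $Y$ in $LG$ is right $\bP_J$-stable. Therefore convolution on the right with sheaves supported on $\bP_J$ preserves support in that closed subset. This gives (1).

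\emph{Condition (2).} Let $w$ be minimal in $wW_J$. Then $\bI w\bP_J=\bI w\bI\times^{\bI}\bP_J$, and by minimality the multiplication map $\bI w\bI\times^{\bI}\bP_J\to \bI w\bP_J$ is an isomorphism. This comes from the length additivity $\ell(wv)=\ell(w)+\ell(v)$ for $v\in W_J$. Sheaves on $\bI\bs\bI w\bI\times^{\bI}\bP_J/\bI$ are computed by a descent/Künneth statement, giving $\cH_w\ot_{\cH_\vn}\cH_J$. Here $\cH_\vn=\Sh(\pt/T)$, and $\cH_w\simeq\Sh(\pt/(\bI\cap w\bI w^{-1}))$ is a rank-one free module over it, which makes the tensor product computable. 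The relevant tensor product identity for sheaves on quotient stacks by pro-unipotent-by-torus groups is standard: one reduces to finite-dimensional quotients, where it is the Künneth formula for $\Sh(X\times^{\bI}Y)$ over $\Sh(\pt/\bI)=\Sh(\pt/T)$.

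\emph{Condition (3).} Each $\om\in\Om$ normalizes $\bI$, so $\bI\om\bI=\om\bI$ and $\wt\cH_\om\simeq\Sh(\pt/T)$ is an invertible $\cH_\vn$-bimodule. It is twisted by the action of $\om$ on $T$, and convolution by it is invertible with inverse given by $\wt\cH_{\om^{-1}}$. Right translation by $\om$ carries $\bI y\bI$ to $\bI y\om\bI$, hence maps closed unions indexed by $Y$ to those indexed by $Y\om$. This gives the equivalences \eqref{right om whole} and \eqref{right om}.

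With (1)–(3) in hand, Theorem \ref{th:abs Om} applies directly and yields the equivalence $\rho$. The main obstacle is the Künneth/descent identification in condition (2), namely that tensoring over $\cH_\vn$ computes sheaves on the twisted product. The standard route is to write everything as modules over $\Sh(\pt/T)$ via Koszul duality or monadicity. Since $\cH_w$ is invertible over $\cH_\vn$, the map in (2) then reduces to checking that left convolution with the standard object $\Delta_w$ identifies $\cH_J$ with $\cH_{wW_J}$ as right modules. That in turn follows from $\Delta_w\star\Delta_v=\Delta_{wv}$ together with the compatibility of both sides with the stratification by $W_J$.
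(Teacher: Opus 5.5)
Your reduction is the same as the paper's: Theorem~\ref{th:Om} follows by checking axioms (1)--(3) for the extended category and applying Theorem~\ref{th:abs Om}. You are also right that the statement should read $\wt\cH\lmod$.

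Where you differ is the verification of axiom (2). The paper (Lemma~\ref{l:pb}) does it in one step. Using $\bI w\bP_J\cong \bI w\bI\times^{\bI}\bP_J$, it rewrites $\bI\bs\bI w\bP_J/\bI$ as $(\bI\cap\Ad(w)\bI)\bs\bP_J/\bI$. Pullback along the forgetful map $\s_{\dot w,J}$ to $\bI\bs\bP_J/\bI$ is, up to shift, your functor $\Delta_w\star-$. It is an equivalence because source and target are both pro-unipotent gerbes over $B_J\bs L_J/B_J$, compatibly with $\s_{\dot w,J}$. Doing the same for $J=\vn$ identifies the map in (2) with $\cH_\vn\ot_{\cH_\vn}\cH_J\to\cH_J$.

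You instead argue stratum by stratum, from $\Delta_w\star\Delta_v\simeq\Delta_{wv}$ and the rank-one freeness of each $\cH_v$ over $\cH_\vn$ (itself an instance of \eqref{pb equiv}). That can be made to work, but matching strata is not enough. You also need $\Delta_w\star-$ to intertwine the gluing functors, i.e.\ the $!$- and $*$-extensions between the strata indexed by $W_J$ and by $wW_J$. This follows from base change along the smooth projection $q\colon\bI\bs(\bI w\bI\times^{\bI}\bP_J)/\bI\to\bI\bs\bP_J/\bI$. Once $\Delta_w\star-$ is written as $q^*$, however, the paper's global gerbe argument makes the induction over strata unnecessary.

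You should also drop the appeal to a ``standard K\"unneth formula'' over $\Sh(\pt/T)$. For ind-constructible \'etale sheaves, $\Sh(X)\ot\Sh(Y)\to\Sh(X\times Y)$ is not an equivalence in general. You do not need it anyway: rank-one freeness of $\cH_w$ already gives $\cH_w\ot_{\cH_\vn}\cH_J\simeq\cH_J$, which is what your last paragraph actually uses.

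The paper's route uses only \eqref{pb equiv}, so it applies verbatim to every sheaf theory in \S\ref{sss:sh}. Yours stays closer to standard-object calculus but needs base-change compatibilities in addition.
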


\begin{remark}
In \S\ref{ss:gen diagram} we prove a further variant of Theorem \ref{th:Om} where we replace $\cD$ by the set of simplices in a suitable subdivision of the fundamental alcove, and replace $\cD/\Om$ by the poset of $\Om$-orbits on simplices. This variant will be applied in \cite{LNY-Ell}.
\end{remark}

%\begin{remark} As usual, one can turn the limit in Theorem \ref{th:Om} into a colimit by passing to left adjoints (i.e., induction functors) to obtain an equivalence
%\begin{equation*}
%\colim_{J\in \cD/\Om}\cH_{J}\lmod\isom \cH\lmod
%\end{equation*}
%where the arrow is induced by the induction functors $\cH\ot_{\cH_{J}}(-): \cH_{J}\lmod\to \cH\lmod$.
%\end{remark}

\subsection{Variants}

\sss{Other sheaf-theoretic contexts}\label{sss:sh}
Results of Tao-Travkin type hold true if one replaces  $\ell$-adic sheaves with other sheaf theories $\Sh(-)$ including
\begin{itemize}
\item Betti setting ($k=\CC$), with any commutative ring as coefficients;
\item \'etale setting (arbitrary $k$) with $\FF_{\ell}, \ov\FF_{\ell},\ZZ_{\ell},  \ov\ZZ_{\ell}, \QQ_{\ell}$ or $\Qlbar$-coefficients, $\ell\ne\ch(k)$;
\item Algebraic $D$-module setting for $\ch(k)=0$;
\item \'etale motivic sheaves (arbitrary $k$) with $\QQ$-coefficients.
\end{itemize}
In all these settings, the corresponding Hecke category for a connected Kac-Moody group is a monoidal category satisfying the conditions in the axiomatic setting \S\ref{sss:abs} and its extension \S\ref{sss:ext}, therefore Theorems \ref{th:KM} and Theorem \ref{th:Om} continue to hold in these sheaf-theoretic contexts.

In checking the condition \eqref{Hw free} in \S\ref{sss:abs}, we observe that any sheaf theory $\Sh(-)$ mentioned above has the following property:
\begin{equation}\label{pb equiv}
\mbox{If $\pi: X\to Y$ is a gerbe for a pro-unipotent group, the pullback $\pi^{*}: \Sh(Y)\to \Sh(X)$ is an equivalence.}
\end{equation}
This is well-known for the Betti, \'etale and $D$-module settings; for the motivic setting with $\QQ$-coefficients, see \cite[Lemma~3.12]{Y}.

%%%%%%%%
%Abstractly, following the work of Heyer--Mann \cite{HM}, suppose $D$ is a $6$-functor formalism for the geometric setup $()$ \Yun{fill in }.  \Yun{After this part is sorted out, may state Theorem \ref{th:KM} for general sheaf theory.}
%
%
%
%We assume that:
%\begin{equation}\label{pb equiv}
%\mbox{If $\pi: X\to Y$ is a gerbe for a pro-unipotent group, the pullback $\pi^{*}: D(Y)\to D(X)$ is an equivalence.}
%\end{equation}

\begin{lemma}\label{l:pb} Let $\cG$ be a connected Kac-Moody group with Weyl group $(W,I)$. Suppose the sheaf theory $\Sh(-)$ satisfies assumption \eqref{pb equiv}, then $\cH=\Sh(\cB\bs \cG/\cB)$ satisfies condition \eqref{Hw free} set out in \S\ref{sss:abs}.
\end{lemma}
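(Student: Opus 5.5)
We compute both sides of the map $\cH_{w}\ot_{\cH_{\vn}}\cH_{J}\to \cH_{wW_{J}}$ geometrically and identify the map with a pullback along a pro-unipotent gerbe. Fix $J\sft I$ and $w$ minimal in $wW_{J}$.

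\textbf{Target.} The locally closed subset $wW_{J}\subset W$ corresponds to the locally closed sub-ind-scheme $\cB w\cP_{J}\subset\cG$. Hence
\begin{equation*}
\cH_{wW_{J}}=\Sh(\cB\bs \cB w\cP_{J}/\cB).
\end{equation*}
Since $w$ is minimal in its coset, multiplication gives an isomorphism
\begin{equation*}
\cB w\cB\times^{\cB}\cP_{J}\isom \cB w\cP_{J}.
\end{equation*}
This is the standard length additivity $\ell(wv)=\ell(w)+\ell(v)$ for $v\in W_{J}$, from the Bruhat decomposition of $\cP_{J}$. Therefore
\begin{equation*}
\cB\bs \cB w\cP_{J}/\cB\cong (\cB\bs \cB w\cB/\cB)\times_{\pt/\cB}(\cB\bs\cP_{J}/\cB),
\end{equation*}
where the fiber product is taken over the maps $\cB\bs\cB w\cB/\cB\to \pt/\cB$ (right factor) and $\cB\bs \cP_{J}/\cB\to \pt/\cB$ (left factor).

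\textbf{Reduction to tori.} We have $\cB\bs\cB w\cB/\cB\cong \pt/(\cB\cap w\cB w^{-1})$. Write $\cB=\cT\ltimes \cN$ with $\cN$ pro-unipotent. The maps $\pt/\cB\to\pt/\cT$ and $\pt/(\cB\cap w\cB w^{-1})\to \pt/\cT$ are gerbes for pro-unipotent groups, so by \eqref{pb equiv} pullback along them is an equivalence. The same holds for $\cB\bs\cP_{J}/\cB\to \cB_{J}^{-}$-type quotients; more precisely, replacing $\cB$ by $\cT$ via the pro-unipotent gerbes $\cT\bs \cP_{J}/\cB\to\cB\bs\cP_{J}/\cB$ identifies things up to pullback. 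In particular:
\begin{itemize}
\item $\cH_{\vn}\cong\Sh(\pt/\cT)$ and $\cH_{w}\cong\Sh(\pt/\cT)$;
\item the right $\cH_{\vn}$-action on $\cH_{w}$ is through the twisted identification by $w$.
\end{itemize}
So $\cH_{w}$ is an invertible $\cH_{\vn}$-bimodule, namely $\Sh(\pt/\cT)$ with one action twisted by $\Ad(w)$.

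\textbf{Tensor product.} It follows that
\begin{equation*}
\cH_{w}\ot_{\cH_{\vn}}\cH_{J}\simeq \cH_{J}
\end{equation*}
with left $\cH_{\vn}$-action twisted by $w$. Concretely, this is $\Sh$ of $\pt/\cT\times_{\pt/\cT,\,\Ad(w)}(\cB\bs\cP_{J}/\cB)$ (after the gerbe reductions). Via the product decomposition above, this stack maps to $\cB\bs\cB w\cP_{J}/\cB$ by a gerbe for the pro-unipotent group $\cN\cap w\cN w^{-1}$-type kernel. Applying \eqref{pb equiv} identifies the two categories.

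\textbf{Remaining checks and the main obstacle.} One must check that this identification agrees with the convolution map, which is a base-change compatibility for the convolution diagram restricted to $\cB w\cB\times^{\cB}\cP_{J}$. The main obstacle is the tensor product $\cH_{w}\ot_{\cH_{\vn}}\cH_{J}$ over the non-rigid monoidal category $\cH_{\vn}=\Sh(\pt/\cB)$: in general $\Sh(X)\ot_{\Sh(\pt/\cB)}\Sh(Y)$ need not equal $\Sh(X\times_{\pt/\cB}Y)$. The argument above avoids this by first showing $\cH_{w}$ is an invertible $\cH_{\vn}$-bimodule, using \eqref{pb equiv} to reduce it to $\cH_{\vn}$ twisted by an automorphism. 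The tensor product is then just $\cH_{J}$ with twisted action, and no Künneth statement is needed.
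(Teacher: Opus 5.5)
\textbf{There is a genuine gap.} Your overall plan matches the paper's. You show $\cH_w$ is free of rank one as a right $\cH_\vn$-module, so the source collapses to $\cH_J$. You then want the resulting functor $\cH_J\to\cH_{wW_J}$ to be an equivalence via \eqref{pb equiv}, with no K\"unneth statement needed. Your treatment of $\cH_\vn$ and $\cH_w$ through $\pt/\cT$ is fine, since on a point the unipotent parts act trivially. The gap is in the step that carries the content, the identification $\cH_J\simeq\cH_{wW_J}$.

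The map $\cT\bs\cP_J/\cB\to\cB\bs\cP_J/\cB$ that you call a pro-unipotent gerbe is not a gerbe. $\cN$ does not act trivially on $\cP_J/\cB\cong L_J/B_J$, so its fibers are $\cB/\cT\cong\cN$. For $J\neq\vn$, pullback along it is not essentially surjective: a skyscraper at a $\cT$-fixed point $vB_J$ with $v\neq e$ is $\cT$-equivariant but not $B_J$-equivariant. Nor is there a gerbe from (a twist of) $\cB\bs\cP_J/\cB$ to $\cB\bs\cB w\cP_J/\cB$. Under your decomposition, $\cB\bs\cB w\cP_J/\cB\cong(\cB\cap\Ad(\dot w)\cB)\bs\cP_J/\cB$. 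The natural comparison map $\sigma_J\colon(\cB\cap\Ad(\dot w)\cB)\bs\cP_J/\cB\to\cB\bs\cP_J/\cB$, changing the group along $\Ad(\dot w^{-1})$, goes the other way. Its fibers are $\cB/(\cB\cap\Ad(\dot w^{-1})\cB)\cong\mathbb{A}^{\ell(w)}$. A map in your direction, such as $p\mapsto\dot w p$ out of $\cT\bs\cP_J/\cB$, has fibers isomorphic to a conjugate of $\cN\cap\Ad(\dot w)\cN$. So it is again not a gerbe, and its source has too many sheaves for the reason just given. Hence \eqref{pb equiv} does not apply to any of these maps.

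\textbf{The missing idea} is to aim the gerbes at $B_J\bs L_J/B_J$: on $\cP_J/\cB\cong L_J/B_J$ you may only discard the pro-unipotent group that acts trivially.
\begin{itemize}
\item The map $\tau_1\colon\cB\bs\cP_J/\cB\to B_J\bs L_J/B_J$ is a gerbe for the pro-unipotent radical $\cP_J^u$.
\item The composite $\tau_1\circ\sigma_J$ is induced by $p_{\dot w}\colon\cB\cap\Ad(\dot w)\cB\xrightarrow{\Ad(\dot w^{-1})}\cB\twoheadrightarrow B_J$. It is a gerbe for $\ker(p_{\dot w})$ because $p_{\dot w}$ is surjective.
\item That surjectivity is a second use of minimality. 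Since $w$ sends the positive roots of $L_J$ to positive roots, $\Ad(\dot w^{-1})\cB\cap\cB$ contains $\cT$ and the root subgroups $U_\beta$ for $\beta$ a positive root of $L_J$, and these generate $B_J$.
\item Two-out-of-three then shows $\sigma_J^*\colon\cH_J\to\cH_{wW_J}$ is an equivalence. The case $J=\vn$ recovers your identification $\sigma_\vn^*\colon\cH_\vn\isom\cH_w$.
\end{itemize}

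This also settles the compatibility with convolution that you leave open, without any base-change computation. $\sigma_J^*$ is right $\cH_J$-linear because $\sigma_J$ only changes the left group. It restricts to $\sigma_\vn^*$ on $\cH_\vn\subset\cH_J$. So for $A\in\cH_\vn$ and $B\in\cH_J$ one has $\sigma_\vn^*(A)*B=\sigma_J^*(A)*B=\sigma_J^*(A*B)$. Hence, after identifying $\cH_w$ with $\cH_\vn$ via $\sigma_\vn^*$, the convolution functor $\cH_w\ot_{\cH_\vn}\cH_J\to\cH_{wW_J}$ becomes $\cH_\vn\ot_{\cH_\vn}\cH_J\cong\cH_J\xrightarrow{\sigma_J^*}\cH_{wW_J}$, which is an equivalence.
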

\begin{proof}
Let $J\sft I$. Let $w\in W$ be the minimal element in its coset $wW_{J}$, and choose a lifting $\dot w$ of $w$. Consider the isomorphism
\begin{equation*}
\cB\bs \cB w\cP_{J}/\cB\cong \cB\bs \cB w\cB\times^{\cB}\cP_J/\cB\cong (\cB\cap \Ad(w)\cB)\bs \cP_J/\cB\cong (\cB\cap \Ad(w)\cB)\bs L_J/B_J.
%(\cB\cap \Ad(w)\cP_{J})\bs \cP_{J}/\cB\cong (\cB\cap \Ad(w)\cP_{J})\bs L_{J}/B_{J}.
\end{equation*}
Here the first isomorphism uses the fact that $w$ is minimal in $wW_{J}$, the second one uses that the left and right $\cB\times \cB$-translation on $\cB w\cB$ has stabilizer isomorphic to $\cB\cap \Ad(w)\cB$ at $\dot w$ under the first projection; $\cB\cap \Ad(w)\cB$ acts on $L_{J}$ through the map
\begin{equation*}
p_{\dot w}: \cB\cap \Ad(w)\cB\xr{\Ad(\dot w^{-1})} \Ad(w^{-1})\cB\cap \cB\incl \cB\surj B_{J}\incl L_J.
\end{equation*}
Thus we have a further projection
\begin{equation*}
\t_{\dot w}: \cB\bs \cB w\cP_{J}/\cB\to B_{J}\bs L_{J}/B_{J}
\end{equation*}
that is a gerbe for the pro-unipotent group $\ker(p_{\dot w})$.

Comparing with the case $w=1$, we get a factorization of $\t_{w}$ as 
\begin{equation*}
\t_{\dot w}: \cB\bs \cB w\cP_{J}/\cB\cong(\cB\cap \Ad(w)\cB)\bs L_{J}/B_{J}\xr{\s_{\dot w,J}}\cB\bs L_{J}/B_{J}\cong \cB\bs \cP_{J}/\cB \xr{\t_{1}} B_{J}\bs L_{J}/B_{J}
\end{equation*}
Here $\s_{\dot w,J}$ is induced by the inclusion $\Ad(\dot w^{-1}): \cB\cap \Ad(w)\cB\incl \cB$. By assumption, both $\t_{1}^{*}$ and $\t_{\dot w}^{*}$ are equivalences after applying the sheaf theory $D$, we see that $\s_{\dot w,J}^{*}: \cH_{J}= \Sh(\cB\bs \cP_{J}/\cB) \to \Sh(\cB\bs \cB w\cP_{J}/\cB)=\cH_{wW_{J}}$ is also an equivalence. By construction, $\s_{\dot w,J}^{*}$ is equivariant with respect to the right action of $\cH_{J}$.  

The same constructions applied to $J=\vn$ yields a map $\s_{\dot w,\vn}:  \cB\bs \cB w\cB/\cB\to \cB\bs \cB/\cB$, which induces an equivalence $\s_{\dot w, \vn}^{*}: \cH_{\vn}\isom \cH_{w}$, equivariant under the right action of $\cH_{\vn}$. These equivalences are compatible with the inclusions, i.e., the following diagram   is commutative
\begin{equation*}
\xymatrix{\cH_{\vn}\ar@{^{(}->}[d]\ar[r]^{\s_{\dot w,\vn}^{*}} & \cH_{w}\ar@{^{(}->}[d]\\
\cH_{J}\ar[r]^{\s_{\dot w,J}^{*}} & \cH_{wW_{J}}}
\end{equation*}
Via the isomorphisms $\s_{\dot w,\vn}^{*}$ and $\s_{\dot w,J}^{*}$, the canonical functor $\cH_{w}\ot_{\cH_{\vn}}\cH_{J}\to \cH_{wW_{J}}$ given by convolution can be identified with the convolution functor $\cH_{\vn}\ot_{\cH_{\vn}}\cH_{J}\to \cH_{J}$, which is clearly an equivalence.
\end{proof}

%\Yun{Consider removing specific statements about motivic setting}
%
%In particular, the abstract main theorem \ref{th:abs} applies to the theory of motivic sheaves, as developed by Ayoub for schemes and extended to stacks by \Yun{fill in ref}. 
%\Nadler{Note that property \eqref{pb equiv} holds for rational coefficients as detailed in \cite[Lemma~3.12]{Y}.}
%
%Let us record the statement here.
%
%
%\begin{theorem}[Motivic Tao--Travkin theorem] Let $\cG$ be a connected Kac-Moody group with Weyl group $(W,I)$. Let $\cH^{\mot}=DA_{c}(\cB\bs \cG/\cB, \L)$ be the stable $\infty$-category of \'etale motivic sheaves on $\cB\bs \cG/\cB$ with $\QQ$-coefficients \Nadler{It seems  the literature is much more developed for rational coefficients... but there is this: https://arxiv.org/abs/1410.6359}, equipped with the monoidal structure given by convolution. Similarly define the motivic finite Hecke categories $\cH^{\mot}_{J}$ for $J\sft I$. Then the inclusions $\cH^{\mot}_{J}\incl \cH^{\mot}$ induce an equivalence in $\Alg(\Pr^{L})$
%\begin{equation*}
%\colim^{\Alg}_{J\in \cD}\cH^{\mot}_{J}\isom \cH^{\mot}.
%\end{equation*}
%\end{theorem}  

\sss{Monodromic sheaves}

Another variant is, instead of considering two-sided $\cB$-equivariant sheaves on a connected Kac-Moody group $\cG$, or, in the setting of \S\ref{ss:disc}, two-sided $\bI$-equivariant sheaves on $LG$, we may consider monodromic sheaves in various senses. To fix notation, consider the situation of a twisted loop group $LG$ as in \S\ref{sss:tw loop group}. Let $A$ be quotient torus of $\bI$, and let $\CS(A, \L)$ be the set of rank one character sheaves on $A$ with $\L$-coefficients (see \cite[Definition 2.11]{DLYZ}) in the Betti, \'etale or $D$-module setting.  We may consider
\begin{enumerate}
\item The universally monodromic Hecke category $\wt\cH_{\mon}$, which is the full subcategory of $\Sh(\bI^{+}\bs LG/\bI^{+})$ consisting of sheaves that are locally constant along $A\times A$-orbits under left and right translation. See \cite[Definition 2.1.7]{LNY} for the Betti setting and \cite[\S4.1.3]{DLYZ} for the \'etale and $D$-module setting (denoted $\cM$ in \cite{DLYZ}; ignore the central extension).

\item When $\L$ is a field, fix a $W_0$-orbit $\Xi\subset \CS(A,\L)$ ($W_0$ is the finite Weyl group of $(G,A)$). We have a full monoidal subcategory ${}_{\wh\Xi}\wt\cH_{\wh\Xi}$ of $\wt\cH_{\mon}$ generated by those sheaves whose left and right monodromy along $A$-orbits lie in $\Xi$ (it is denoted $\cM_{\Xi}$ or $\bigoplus_{\chi,\chi'\in\Xi}D((\bI,\chi\lmon)\bs LG/(\bI,\chi'\lmon))$ in \cite{DLYZ}). When $\L$ is a local ring with residue field $k_{\L}$, one may similarly fix $\Xi\in \CS(A,k_{\L})$ and get a full monoidal subcategory ${}_{\wh\Xi}\wt\cH_{\wh\Xi}$ of $\wt\cH_{\mon}$ (see \cite[Corollary 2.28]{DLYZ}).

\item When $\L$ is a field and $\Xi\subset \CS(A,\L)$ is a $W_0$-orbit, we have the strictly equivariant category ${}_{\Xi}\wt\cH_{\Xi}$ (denoted ${}^{\textup{s}}\cM^{\textup{s}}_{\Xi}$ or $\bigoplus_{\chi,\chi'\in \Xi}D((\bI,\chi)\bs LG/(\bI,\chi'))$ in \cite{DLYZ}). 
\end{enumerate}

Similarly, we have three versions of monodromic Hecke categories for connected Kac-Moody groups, denoted $\wt\cH_{\mon}, {}_{\wh\Xi}\wt\cH_{\wh\Xi}$ and ${}_{\Xi}\wt\cH_{\Xi}$.  It is standard to check, by adapting the proof of Lemma \ref{l:pb} to the monodromic situation, that each version of the monodromic affine Hecke category above satisfy the conditions of \S\ref{sss:abs} and \S\ref{sss:ext}. Therefore, Theorems \ref{th:KM} hold for the categories $\cH_{\mon}, {}_{\wh\Xi}\cH_{\wh\Xi}$ and ${}_{\Xi}\cH_{\Xi}$, and Theorem \ref{th:Om} hold for the categories $\wt\cH_{\mon}, {}_{\wh\Xi}\wt\cH_{\wh\Xi}$ and ${}_{\Xi}\wt\cH_{\Xi}$. 

In the setup of \S\ref{sss:tw loop group}, we may also replace $LG$ by its Kac-Moody central extension $\wt LG$, and consider the corresponding monodromic affine Hecke categories for $\wt LG$ (with $A$ is replaced by $\wt A$, a one-dimensional larger torus containing the Kac-Moody center of $\wt LG$, and $\Xi$ denotes a $\tilW$-orbit in $\CS(\wt A,\L)$).

\sss{Mixed characteristic Hecke categories} Let $G$ be a connected reductive group over a characteristic zero discrete valued complete field $F'$ with residue field $k=\ov k$ of characteristic $p>0$. Assume that $G$ splits over a tamely ramified extension of $F'$. The loop group and affine flag variety of $G$ are defined as perfect algebraic spaces over $k$ by Xinwen Zhu \cite{Zhu}. Working in the \'etale setting, it makes sense to form the affine Hecke category $\wt\cH$ of $G$ with coefficients $\L=\FF_{\ell}, \ov\FF_{\ell}, \ZZ_{\ell},\ov\ZZ_{\ell},\QQ_{\ell}$ or $\Qlbar$. The monoidal category $\cH=\Sh(\bI\bs (LG)^{\c}/\bI,\L)$ satisfies the conditions in \S\ref{sss:abs}, and $\wt\cH$ satisfies the extra condition in \S\ref{sss:ext}.  Therefore Theorem \ref{th:KM} holds for the neutral block $\cH$, and Theorem \ref{th:Om} holds for $\wt\cH$ in the mixed characteristic setting as well.

%%%%%%%%%%%%%
\subsection*{AI disclosure} Any text generated or inspired by AI is marked \AI{blue}.

%%%%%
\subsection*{Acknowledgments}

We thank Sanath Devalapurkar for helpful comments.

PL was partially supported by National Key R \& D Program of China (grant 2024YFA1014700) and NSFC grant \#12522103.  DN was supported by NSF DMS grant \#2401178. YV and ZY were partially supported by the BSF grant 2024252. YV was partially supported by the ISF grant 2889/25. ZY was partially supported by the Simons Investigator grant.

%%%%%

%%%%%%%%%%%%%%%%%
%%%%%%%%%%%%%%%%%
\section{Proofs}

\subsection{Proof of Theorem \ref{th:abs}: first reductions}\label{ss:red} For \S\ref{ss:red}-\S\ref{ss:pf abs}, we are in the setup of \S\ref{sss:abs}.

Let $\Pr^{L}$ (resp. $\Pr^{R}$) be the $\infty$-category of presentable categories with functors that are left adjoints (resp. right adjoints).

For $\cA\in \Alg(\Pr^{L})$, let $\cA\lmod$ be the $\infty$-category (namely, $(\infty,1)$-category) of $\cA$-modules in $\Pr^{L}$. For a monoidal functor $T: \cA_{1}\to \cA_{2}$, we have a pair of adjoint functors
\begin{equation*}
\xymatrix{T_{*}=\cA_{2}\ot_{\cA_{1}}(-): \cA_{1}\lmod\ar@<.5ex>[r] & \ar@<.5ex>[l] \cA_{2}\lmod : T^{*}.}
\end{equation*}
Here $T^{*}$ keeps the underlying categories and restricts the action from $\cA_{2}$ to $\cA_{1}$ along $T$. Using the induction functors we get a functor
\begin{equation*}
\Mod_{*}: \Alg(\Pr^{L})\to \Pr^{L}. 
\end{equation*}
Using the restriction functors we get
\begin{equation*}
\Mod^{*}: \Alg(\Pr^{L})^{\opp}\to \Pr^{R}. 
\end{equation*}

Consider the limit $\lim_{\cD^{\opp}}\cH_\bu\lmod$ (taken in the $\infty$-category of $\infty$-categories), with transition functors given by restriction functors $\cH_{J'}\lmod\to \cH_{J}\lmod$ for $J\subset J'\in \cD$. An object of $\lim_{J\in \cD^\opp}\cH_J\lmod$ is a category $\cN\in \Pr^{L}$ with compatible actions of $\cH_{J}$ for all $J\in \cD^{\opp}$.

Restriction and induction of modules along the inclusions $\cH_{J}\incl \cH$ give a pair of adjoint functors
\begin{equation*}
\xymatrix{\rho^{L}: \lim_{J\in \cD^\opp}\cH_J\lmod\ar@<.5ex>[r] & \ar@<.5ex>[l]\cH\lmod : \rho}
\end{equation*}
Here $\rho$ sends an $\cH$-module $\cM$ to the same underlying presentable category $\cM$ with actions of $\cH_{J}$ by restricting the $\cH$-action. Its left adjoint $\rho^{L}$ sends $\cN\in \Pr^{L}$ with compatible actions of $\cH_{J}$ to
\begin{equation*}
\r^L(\cN)=\colim_{J\in\cD}\cH\ot_{\cH_J}\cN.
\end{equation*}

\begin{theorem}\label{th:unit} Under the assumptions of \S\ref{sss:abs}, the unit map $u:\id \to \r\c\r^{L}$ is an isomorphism.
\end{theorem}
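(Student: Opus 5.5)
We have to show that for $\cN\in\lim_{J\in\cD^{\opp}}\cH_J\lmod$ the map $\cN\to\colim_{J\in\cD}\cH\ot_{\cH_J}\cN$ is an equivalence. Here $\cN$ is regarded as an $\cH_{\vn}$-module, and more precisely as an object of the limit after restricting along each $\cH_{J}$. The plan is to filter $\cH$ by the recollement structure and compute the colimit one stratum at a time.

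\emph{Step 1 (filtering by right cosets).} For each $J\sft I$ and each down-closed $Y\subset W$ that is a union of right $W_J$-cosets, assumption (1) makes $\cH_Y$ a right $\cH_J$-submodule of $\cH$. Hence $\cH\ot_{\cH_J}\cN$ is filtered by the pieces $\cH_Y\ot_{\cH_J}\cN$. The associated graded pieces are $\cH_{wW_J}\ot_{\cH_J}\cN$ with $w$ minimal in $wW_J$. By assumption (2) this piece is
\begin{equation*}
\cH_{wW_J}\ot_{\cH_J}\cN\simeq \cH_w\ot_{\cH_{\vn}}\cH_J\ot_{\cH_J}\cN\simeq \cH_w\ot_{\cH_{\vn}}\cN.
\end{equation*}
Tensoring over $\cH_J$ with $\cN$ preserves recollements: closed embeddings, quotients and colimits commute with $-\ot_{\cH_J}\cN$ in $\Pr^L$, and the recollement is a finite filtration on each finite piece. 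So for every $w\in W$, the functor $J\mapsto \cH\ot_{\cH_J}\cN$ acquires a compatible stratification by $W$. Its $w$-stratum is $\cH_w\ot_{\cH_{\vn}}\cN$ when $w$ is minimal in $wW_J$, and it is zero otherwise, since each $w$ lies in exactly one coset and we attach that coset's contribution to the minimal element. I would rather phrase this via the closed pieces $\cH_{\le w}$, or via the open/closed exact sequences.

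\emph{Step 2 (colimit stratum by stratum).} Colimits in $\Pr^L$ commute with the recollement gluing, so the $w$-stratum of $\colim_J \cH\ot_{\cH_J}\cN$ is a colimit over $\cD$ built from copies of $\cH_w\ot_{\cH_{\vn}}\cN$. The bookkeeping is as follows. For $J\subset J'$, the transition map sends the $W_J$-coset of $w$ into its $W_{J'}$-coset. Fix $w$ and let $D_w=\{i\in I: ws_i<w\}$ be its right descent set; then $w$ is minimal in $wW_J$ iff $J\cap D_w=\vn$. For $J$ meeting $D_w$, the $w$-stratum lies inside a coset stratum whose minimal representative is a different element, and I must track it through the filtration rather than declare it zero. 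The cleanest formulation is this: the $w$-stratum functor, as $J$ varies, is the left Kan extension from the subposet $\cD_{w}=\{J: J\cap D_w=\vn\}$ of the constant diagram $\cH_w\ot_{\cH_{\vn}}\cN$. Its colimit over $\cD$ is then $\cH_w\ot_{\cH_{\vn}}\cN\ot|\cD_w|$, in the sense of tensoring with the homotopy type of the nerve. Since $\cD_w$ has the initial object $\vn$, the nerve $|\cD_w|$ is contractible. Thus the colimit's $w$-stratum is $\cH_w\ot_{\cH_{\vn}}\cN$, which is the $w$-stratum of $\cH\ot_{\cH_{\vn}}\cN$.

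\emph{Step 3 (conclude).} This gives an equivalence of the colimit with $\cH\ot_{\cH_{\vn}}\cN$ stratumwise. But the unit is a map from $\cN$, so this is not yet the statement, and I must rethink: the correct comparison is via $\rho$ applied, i.e. showing $\cN\to\rho\rho^L\cN$ is an equivalence of categories. The real approach should use the $\cH_J$-actions on $\cN$ itself to identify the $w$-strata with $\cN$ via the action of the objects $\cH_w\simeq\cH_{\vn}$ (Lemma \ref{l:pb}-type trivialization). The main obstacle, which I would attack first, is to make precise the claim that the diagram $J\mapsto \cH\ot_{\cH_J}\cN$, filtered by $W$, has colimit whose pieces glue back to exactly $\cN$ through the unit. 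The key combinatorial input is that for each $w\ne 1$ the relevant poset of $J$'s contributing to the stratum is contractible, or more precisely that the contributions cancel for $w\neq1$ via the simplicial structure of $\cD$. I expect this requires an induction on length using the braid-type relations encoded in the $\cH_J$-module structures of $\cN$.
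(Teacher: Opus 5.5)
Your Step 1 is fine, but Step 2 goes wrong exactly where the theorem is decided. Fix $w\neq e$ and compare the filtrations for $Y$ and $Y\sqcup\{w\}$. The $w$-graded piece of $J\mapsto\cH\ot_{\cH_J}\cN$ is $\cH_w\ot_{\cH_{\vn}}\cN$ on the down-closed subposet $\cD_w=\{J: J\cap D_w=\vn\}$, with equivalences as transition maps by assumption (2). On the up-closed complement $\cD'=\{J\sft I: J\cap D_w\neq\vn\}$ it is \emph{zero}. Indeed, for such $J$ some $ws_i$ with $i\in J$ lies below $w$, so $YW_J=(Y\sqcup\{w\})W_J$. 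Hence any transition $J_1\subset J_2$ with $J_1\in\cD_w$ and $J_2\in\cD'$ kills the $w$-piece. This diagram is an extension by zero, not the left Kan extension from $\cD_w$. The left Kan extension is constant on all of $\cD$, because $\{K\subset J: K\cap D_w=\vn\}$ has the maximum $J\smallsetminus D_w$. So your conclusion that every $w$-stratum of the colimit is $\cH_w\ot_{\cH_\vn}\cN$ is false. If it held, the colimit would be $\cH\ot_{\cH_\vn}\cN$ and the theorem would fail, which is the contradiction you hit in Step 3. Step 3 then gives no argument, and no braid-type relations on $\cN$ are needed.

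The missing idea is that for $w\neq e$ the $w$-piece contributes \emph{nothing} to the colimit. It is ``constant on $\cD$ relative to constant on $\cD'$'', so its contribution is governed by the pair $(|\cD|,|\cD'|)$, and both spaces are contractible.
\begin{itemize}
\item $|\cD|$ is contractible since $\vn$ is initial.
\item $D_w$ is of finite type: write $w=w'v$ with $w'$ minimal in $wW_{D_w}$. Every $s_j$ with $j\in D_w$ is then a right descent of $v$, which forces $W_{D_w}$ to be finite.
\item $|\cD'|$ is contractible: $J\mapsto J\cap D_w$ maps $\cD'$ to the poset of nonempty subsets of $D_w$, which has a maximum. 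Each $\{J: J\cap D_w\supseteq K\}$ has minimum $K$, so Quillen's Theorem A applies.
\item For $w=e$ one has $\cD'=\vn$, and the piece survives as $\colim_{\cD}\cN=\cN$.
\end{itemize}
You also assert without justification that one can take ``strata of a colimit in $\Pr^L$''. The transition functors preserve only the filtration by down-closed $Y$, not individual coset strata. The paper avoids this by working with $\cM[Y]=\colim_{J}\cH_{YW_J}\ot_{\cH_J}\cN$ for finite down-closed $Y$. Then $\cM=\colim_Y\cM[Y]$ and $\cM[\{e\}]=\cN$, and it suffices to show $\cM[Y]\to\cM[Y\sqcup\{w\}]$ is an equivalence.
\begin{itemize}
\item For $J\in\cD'$ the two terms coincide.
\item For $J\notin\cD'$, the square from $J=\vn$ to $J$ is a pushout. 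Assumption (2) makes the map of new open pieces $\cH_w\ot_{\cH_\vn}\cN\to\cH_{wW_J}\ot_{\cH_J}\cN$ an equivalence. A map of recollements that is an equivalence on open parts yields a pushout square.
\item Such a weak elementary expansion along $\cD'\incl\cD$, with $\cD'$ acyclic, induces an equivalence on colimits.
\end{itemize}
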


Assuming this for the moment, we finish the proof of Theorem \ref{th:abs}.

\begin{cor}\label{c:rho equiv} The functors $(\r^{L},\r)$ are inverse equivalences.
\end{cor}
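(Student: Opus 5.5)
Given Theorem \ref{th:unit}, the unit $u:\id\to\r\c\r^{L}$ is an isomorphism, so $\r^{L}$ is fully faithful. It remains to show that $\r$ is conservative. Then the standard adjunction argument finishes the proof: for any $\cM\in\cH\lmod$, applying $\r$ to the counit $c:\r^{L}\r\cM\to\cM$ gives a map $\r(c)$. The triangle identity $\r(c)\c u_{\r\cM}=\id_{\r\cM}$, together with invertibility of $u_{\r\cM}$, shows that $\r(c)$ is an isomorphism. Conservativity of $\r$ then forces $c$ itself to be an isomorphism. So both unit and counit are invertible, and $(\r^{L},\r)$ are inverse equivalences.

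Conservativity of $\r$ is essentially formal. A morphism in $\cH\lmod$ is an $\cH$-linear functor $f:\cM\to\cM'$ between presentable categories. It is an isomorphism in $\cH\lmod$ exactly when the underlying functor in $\Pr^{L}$ is an equivalence, since the forgetful functor $\cH\lmod\to\Pr^{L}$ is conservative (an $\cH$-linear functor whose underlying functor is invertible has an inverse with a canonical $\cH$-linear structure). Likewise, the forgetful functor from $\lim_{J\in\cD^{\opp}}\cH_{J}\lmod$ to $\Pr^{L}$ is conservative. The composite of $\r$ with this forgetful functor is just the forgetful functor from $\cH\lmod$, so if $\r(f)$ is an isomorphism then the underlying functor of $f$ is an equivalence, and hence $f$ is an isomorphism. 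One small point to check is that $\cD$ is nonempty (it contains $\vn$), so the limit really carries the underlying category; but this is not even needed, since $\r$ visibly retains the underlying category $\cM$.

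There is no real obstacle at this step. All the substance lies in Theorem \ref{th:unit}, which we are assuming here, and the corollary is purely the formal deduction described above.
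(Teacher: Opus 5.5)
Your proof is correct and follows the paper's: the invertible unit makes $\r^{L}$ fully faithful, and conservativity of $\r$ then forces the counit to be invertible via the triangle identity. The paper simply calls this conservativity clear; you justify it through the conservative forgetful functor $\cH\lmod\to\Pr^{L}$. One small correction: your remark that nonemptiness of $\cD$ is "not even needed" is slightly off, since it is what lets the limit remember the underlying category, but it holds because $\vn\in\cD$.
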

\begin{proof}
Theorem \ref{th:unit} implies that $\r^{L}$ is fully faithful. On the other hand, $\r$ is clearly conservative, hence $\r$ is an equivalence. 

%we check that the counit map $c: \r^{L}\c\r\to \id$ is an isomorphism. Since , it suffices to check this after composing with $\r$, which becomes $\r\c\r^{L}\c\r\to \r$. The composition $\r\to 

\end{proof}

\begin{remark} In our experience, it is Corollary \ref{c:rho equiv} rather than Theorem \ref{th:abs} that is used in various applications. For example, Corollary \ref{c:rho equiv} implies that for any $\cH$-bimodule $\cM$, we have a colimit presentation of its Hochschild homology
\begin{equation*}
\colim_{J\in \cD}\hh(\cH_{J}, \cM)\isom\hh(\cH,\cM).
\end{equation*}
This was proved in \cite[Corollary 2.5.5]{LNY}, using only Theorem \ref{th:unit} and not the Tao-Travkin equivalence directly.
\end{remark}

\sss{Proof of Theorem \ref{th:abs}}
Let $\cH'=\colim_{\cD}\cH_{J}$, with monoidal functor $T: \cH'\to \cH$ induced by the diagram $\cH^{\rhd}_{\bu}$. We need to show that $T$ is an equivalence.

By \cite[Cor. 4.8.5.13]{HA}, $\Mod_{*}$ preserves colimits indexed by contractible simplicial sets, therefore $\Mod^{*}$ preserves limits indexed by contractible simplicial sets. Observe that $\cD$ is contractible since it has a minimal element. Thus the natural functor
\begin{equation*}
\r': \cH'\lmod\to \lim_{\cD^{\opp}}\cH_{\bu}\lmod
\end{equation*}
is an equivalence. Precomposing $\r'$ with $T^{*}: \cH\lmod\to \cH'\lmod$, we recover the functor $\r$. Since both $\r'$ and $\r$ (by Corollary \ref{c:rho equiv}) are equivalences, $T^{*}$ is also an equivalence. \AI{Thus the unit map $\id\to T^{*}\c T_{*}$ is an isomorphism. Applying the unit map to the regular module $\cH'\in \cH'\lmod$, we obtain the functor $ \cH'\to T^{*}\c T_{*}(\cH')=\cH\ot_{\cH'}\cH'\cong \cH$, which is the same as $T$. Thus $T$ is an equivalence}.

\subsection{Proof of Theorem \ref{th:unit}}\label{ss:pf abs}
The unit map $u:\id \to \r\c\r^{L}$ applied to $\cN\in \lim_{J\in \cD^\opp}\cH_J\lmod$ is the composition
\begin{equation}\label{N to colim}
\cN\xr{\one_{\cH}\ot \id_{\cN}} \cH\ot_{\cH_{\vn}}\cN\to \cM:=\colim_{J\in \cD}\cH\ot_{\cH_{J}}\cN.
\end{equation}
Here the first functor sends $C\in \cN$ to $\one_{\cH}\ot C$, where $\one_{\cH}$ is the monoidal unit of $\cH$. Thus to show Theorem \ref{th:unit}, it suffices to show that \eqref{N to colim} is an equivalence at the level of underlying categories.

Recall that $\cH$ admits a recollement structure indexed by the poset $W$. 
%given by the stratification of $\Fl_{\cG}$ into $\cB$-orbits, which are indexed by $W$ with the Bruhat partial order $\le$. 
For any two down-closed subsets $Y\subset Y'\subset W$, we have a recollement
\begin{equation*}
\xymatrix{\cH_{Y}\ar[r]  &\ar@<1ex>[l]\ar@<-1ex>[l] \cH_{Y'} \ar[r] & \ar@<1ex>[l]\ar@<-1ex>[l]\cH_{Y'-Y}. }
\end{equation*}
Here all arrows are in $\Pr^L$, i.e., they preserve colimits.

If $Y$ and $Y'$ are stable under right multiplication by $W_{J}$, then the above is a recollement of $\cH_{J}$-modules. By the functoriality of Lurie tensor product, for any $\cN\in \cH_{J}\lmod$, we get a recollement
\begin{equation*}
\xymatrix{\cH_{Y}\ot_{\cH_{J}}\cN\ar[r]  &\ar@<1ex>[l]\ar@<-1ex>[l] \cH_{Y'}\ot_{\cH_{J}}\cN \ar[r] & \ar@<1ex>[l]\ar@<-1ex>[l]\cH_{Y'-Y}\ot_{\cH_{J}}\cN. }
\end{equation*}

Now let $\cN\in \lim_{\cD^{\opp}}\cH_{\bu}\lmod$. For any down-closed subset $Y\subset W$ and $J\subset J'\in \cD$, the right $\cH_{J}$-linear inclusion $\cH_{YW_{J}}\incl \cH_{YW_{J'}}$ induces functors $\cH_{YW_{J}}\ot_{\cH_{J}}\cN\to \cH_{YW_{J'}}\ot_{\cH_{J'}}\cN$, and we can form the colimit in $\Pr^{L}$
\begin{equation*}
\cM[Y]:=\colim_{J\in \cD}\cH_{YW_{J}}\ot_{\cH_{J}}\cN.
\end{equation*}
For $Y\subset Y'$ (both down-closed), the inclusions $\cH_{YW_{J}}\incl \cH_{Y'W_{J}}$ induce functors
\begin{equation*}
t_{Y}^{Y'}: \cM[Y]\to \cM[Y'].
\end{equation*}
Let $\cY$ be the set of finite down-closed subsets of $W$, partially ordered under inclusion.  Using the functors $t_{Y}^{Y'}$ as transition functors we form the colimit $\colim_{Y\in \cY} \cM[Y]$, equipped with a canonical functor to $\cM$
\begin{equation}\label{colim MY}
\colim_{Y\in \cY} \cM[Y]\to \cM.
\end{equation}

\begin{lemma}
The functor \eqref{colim MY} is an equivalence.
\end{lemma}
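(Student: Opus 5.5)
The plan is to commute the two colimits and then use that, for fixed $J$, the category $\cH$ is the filtered colimit of the closed subcategories $\cH_{YW_J}$. First, the left side of \eqref{colim MY} is a colimit over $\cY\times\cD$:
\begin{equation*}
\colim_{Y\in\cY}\cM[Y]=\colim_{Y\in \cY}\colim_{J\in\cD}\cH_{YW_J}\ot_{\cH_J}\cN\simeq \colim_{J\in\cD}\colim_{Y\in\cY}\cH_{YW_J}\ot_{\cH_J}\cN.
\end{equation*}
Colimits in $\Pr^{L}$ commute with colimits, so this interchange is automatic. The transition functors are the ones induced by the inclusions $\cH_{YW_J}\incl \cH_{Y'W_{J'}}$ for $Y\subset Y'$ and $J\subset J'$. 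These form a functor on $\cY\times\cD$, since they are all restrictions of the identity of $\cH$ to full subcategories tensored over the appropriate $\cH_J$. The comparison functor \eqref{colim MY} is then the colimit over $J$ of the functors
\begin{equation*}
\colim_{Y\in\cY}\cH_{YW_J}\ot_{\cH_J}\cN\to \cH\ot_{\cH_J}\cN .
\end{equation*}
So it suffices to show that each of these is an equivalence for fixed $J\in\cD$.

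Second, fix $J$. The relative tensor product $-\ot_{\cH_J}\cN$ is computed by a geometric realization of the bar construction $\cH_{YW_J}\ot\cH_J^{\ot n}\ot\cN$ in $\Pr^{L}$. It therefore commutes with colimits in the right $\cH_J$-module variable. We are thus reduced to showing that
\begin{equation*}
\colim_{Y\in\cY}\cH_{YW_J}\to\cH
\end{equation*}
is an equivalence in right $\cH_J$-modules. Since the forgetful functor from $\cH_J$-modules in $\Pr^{L}$ to $\Pr^{L}$ preserves colimits, it is enough to prove this in $\Pr^{L}$.

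Third, we need to know that $\cH$ is the colimit of $\cH_{Z}$ over finite down-closed $Z$. We take this as part of the recollement structure indexed by $W$. In the geometric examples, $\cH$ is defined as the colimit over Schubert varieties, i.e. over the $\cH_{\le w}$. The assignment $Y\mapsto YW_J$ sends $\cY$ into finite down-closed subsets, because $W_J$ is finite and $YW_J$ is down-closed: if $y'\le yv$ with $v\in W_J$, then $y'\in (\le y)W_J$ by the standard property of Bruhat order with respect to parabolic cosets. This functor is cofinal in $\cY$, since $Y\subset YW_J$ and $YW_J\in\cY$. Hence $\colim_{Y}\cH_{YW_J}\simeq\colim_{Z\in\cY}\cH_Z\simeq\cH$.

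The main obstacle is this last identification. The axioms of \S\ref{sss:abs} do not literally state that $\cH=\colim_{Z\in\cY}\cH_Z$ for an infinite poset $W$, so I would make explicit that a recollement structure indexed by $W$ includes this exhaustion. Equivalently, the right adjoints $\cH\to\cH_Z$ jointly detect objects and $\cH\simeq\lim_Z\cH_Z$ along these right adjoints, which passes to $\colim$ in $\Pr^{L}$ by \cite[HTT 5.5.3.13]{HA}. I also need to check that the down-closure of $YW_J$ is compatible with the Bruhat order, which is the standard parabolic lifting property.
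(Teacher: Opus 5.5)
Your proposal is correct and follows the paper's argument: use $\cH=\colim_{Y\in\cY}\cH_{YW_J}$, pass this colimit through $-\ot_{\cH_J}\cN$, and swap the colimits over $\cY$ and $\cD$. You also make explicit several points the paper leaves implicit: that $YW_J$ is finite and down-closed, that $Y\mapsto YW_J$ is cofinal in $\cY$, and that the exhaustion $\cH\simeq\colim_{Z\in\cY}\cH_Z$ has to be understood as part of the recollement structure indexed by $W$.
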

\begin{proof}
Using $\cH=\colim_{Y\in \cY}\cH_{YW_{J}}$, we write $\cM$ as a double colimit
\begin{equation*}
\cM=\colim_{J\in \cD}\colim_{Y\in \cY}(\cH_{YW_{J}}\ot_{\cH_{J}}\cN).
\end{equation*}
Swapping the order of colimits, we write $\cM$ as
\begin{equation*}
\cM\cong\colim_{Y\in \cY}\colim_{J\in \cD}(\cH_{YW_{J}}\ot_{\cH_{J}}\cN)=\colim_{Y\in \cY} \cM[Y].
\end{equation*}
\end{proof}

Take $Y$ to be the singleton consisting of the unit element $e\in W$, we have $\cM[\{e\}]=\colim_{J\in \cD}\cN=\cN$ (since $\cD$ is contractible), and the functor \eqref{N to colim} can be written as the composition
\begin{equation*}
\cN\cong \cM[\{e\}]\to \colim_{Y\in \cY}\cM[Y]\isom \cM.
\end{equation*}

Thus, in order to show that the functor is an equivalence, and hence finishing the proof of Theorem \ref{th:unit}, it remains to show the following statement. 
\begin{prop}\label{p:}
For any two finite down-closed subsets $Y\subset Y'\subset W$, the transition functor $t_{Y}^{Y'}: \cM[Y]\to \cM[Y']$ is an equivalence.
\end{prop}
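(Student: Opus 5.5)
By induction on $|Y'-Y|$ I will reduce to the case $Y'=Y\sqcup\{w\}$, where $w$ is a minimal element of $W-Y$; every finite down-closed $Y'\supset Y$ is reached by adding one element at a time while staying down-closed. The functor $t_Y^{Y'}$ is a colimit over $J\in\cD$ of the fully faithful embeddings $\cH_{YW_J}\ot_{\cH_J}\cN\incl\cH_{Y'W_J}\ot_{\cH_J}\cN$. These are the closed parts of the recollements obtained by tensoring with $\cN$, so their cofibers are $\cH_{Y'W_J-YW_J}\ot_{\cH_J}\cN$. Colimits in $\Pr^L$ of recollements along compatible functors preserving the recollement data are again recollements. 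Hence $\cM[Y]\to\cM[Y']$ is the closed embedding of a recollement whose open quotient is
\begin{equation*}
\cQ:=\colim_{J\in\cD}\cH_{Y'W_J-YW_J}\ot_{\cH_J}\cN.
\end{equation*}
It therefore suffices to show $\cQ=0$. To justify this step I would write the colimit over $\cD$ in $\Pr^L$ as a limit in $\Pr^R$ of right adjoints. The key point is that the transition maps commute with the gluing functors because the inclusions $\cH_{YW_J}\incl\cH_{YW_{J'}}$ are compatible with the recollements.

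Next I compute the terms of $\cQ$. Let $K=K_w=\{i\in I: ws_i<w\}$ be the right descent set of $w$. If $J\not\subset\{i: ws_i\in Y\}$, i.e.\ some $i\in J$ has $ws_i>w$, then $ws_i\notin YW_J$ can still hold. So it is better to argue as follows. Note $Y'W_J-YW_J$ is either empty (when $w\in YW_J$) or the coset $wW_J$. The latter holds exactly when $w$ is minimal in $wW_J$ and $wW_J\cap Y=\emptyset$. By minimality of $w$ in $W-Y$, every $ws_i<w$ lies in $Y$, so the second condition forces $J\cap K=\emptyset$. Conversely, if $J\cap K=\emptyset$ then $w$ is minimal in $wW_J$, and every element of $wW_J$ is $\ge w$, hence not in $Y$. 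So the coset appears iff $J\subset I-K$. For such $J$, condition (2) gives
\begin{equation*}
\cH_{wW_J}\ot_{\cH_J}\cN\cong\cH_w\ot_{\cH_\vn}\cH_J\ot_{\cH_J}\cN\cong\cH_w\ot_{\cH_\vn}\cN.
\end{equation*}
This is independent of $J$, and the transition maps for $J\subset J'\subset I-K$ are identities under this identification. Thus $\cQ$ is a colimit over $\cD$ of a functor $F$ that equals the constant value $\cH_w\ot_{\cH_\vn}\cN$ on the down-closed subposet $\cD_{\le I-K}=\{J\sft I: J\cap K=\vn\}$ and vanishes elsewhere.

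The main obstacle is showing that this colimit vanishes. The functor $F$ is the left Kan extension by zero from the down-closed subset, so $\cQ$ is the cofiber
\begin{equation*}
\cQ\simeq\operatorname{cofib}\Bigl(\colim_{\cD_{\le I-K}}\bigl(\cH_w\ot_{\cH_\vn}\cN\bigr)\to\colim_{\cD}F\Bigr).
\end{equation*}
Equivalently, in the stable setting, $\cQ$ is the constant value tensored with the reduced homology of the pair $(N(\cD),N(\cD)-N(\cD_{\le I-K}))$. I would therefore compute via $\cQ\cong \cH_w\ot_{\cH_\vn}\cN\ot C$, where $C$ is the relative chains of the nerve of $\cD$ modulo the subposet $\cD'=\{J: J\cap K\neq\vn\}$. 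This requires $K\neq\vn$, which holds since $w\neq e$ because $e\in Y$ (unless $Y=\vn$, in which case adjoin $e$ first; there $K=\vn$ and $\cQ=\cN$, which is the base case $\cM[\vn]=0\to\cM[\{e\}]=\cN$, harmless since we start from $\{e\}$). It then suffices that $\cD'\incl\cD$ is a weak equivalence. Both are contractible: $\cD$ has the minimum $\vn$. For $\cD'$, the nonempty finite-type subsets of $K$ form a poset whose nerve is the simplex on $K$, because $W_K$ is finite, as $w$ is a minimal-length representative of $wW_K$ with $K$ its descent set. The map $J\mapsto J\cap K$ is a retraction of $\cD'$ onto this poset, with $J\cap K\subset J$ giving a homotopy. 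Hence $\cD'$ is contractible. The delicate point I expect is making the cofiber and vanishing argument rigorous for colimits in $\Pr^L$ rather than chain-level homology. I would handle this by writing the colimit as a limit of right adjoints in $\Pr^R$. The limit over $\cD$ restricted from the contractible $\cD'$ with zero outside then gives $\cQ=0$, using the cofinality of $\cD'$ in $\cD^{\vartriangleright}$-style Kan extension arguments.
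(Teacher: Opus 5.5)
You have a genuine gap. Your reduction to $Y'=Y\sqcup\{w\}$ and your case analysis agree with the paper and are correct. That analysis says $Y'W_J-YW_J$ is empty when $J\cap K\neq\vn$ and equals $wW_J$ otherwise, with $\cH_{wW_J}\ot_{\cH_J}\cN\simeq\cH_w\ot_{\cH_\vn}\cN$ by condition (2).

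The gap is the step claiming that $t_Y^{Y'}$ is the closed embedding of a recollement with open part $\mathcal{Q}=\colim_J\mathcal{Q}_J$. Your justification, that the transition functors preserve the recollement data, is false. Take $J\cap K=\vn$ and $J\subset J'$ with $J'\cap K\neq\vn$. Then $\cF_J\to\cF_{J'}$ carries the open piece $\cH_w\ot_{\cH_\vn}\cN$ into the closed part $\cE_{J'}=\cF_{J'}$, because $w\in YW_{J'}$. This functor commutes with $i_*$ and $j^*$. It does not commute with $j_!$: since $\mathcal{Q}_{J'}=0$, that would force it to kill $j_!\mathcal{Q}_J$. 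So the right adjoint of the colimit map cannot be computed componentwise, and full faithfulness does not follow.

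Formally, all you get is a cofiber sequence $\colim\cE_\bu\to\colim\cF_\bu\to\mathcal{Q}$ in $\Pr^{L}_{\mathrm{st}}$. A zero cofiber there only says the right adjoint of $t_Y^{Y'}$ is conservative. A toy example shows the principle fails:
\begin{itemize}
\item take $\cD=\{\vn<a,\ \vn<b\}$ and $\cE_\vn=0$;
\item let all other $\cE_J,\cF_J$ equal a fixed nonzero $\mathcal{C}\in\Pr^{L}_{\mathrm{st}}$, with identity transitions.
\end{itemize}
Everything you use is present: pointwise recollements, transitions compatible with $i_*$ and $j^*$, and $\mathcal{Q}$ constant on a down-closed part and zero on the up-closed $\cD'=\{a,b\}$. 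Also $\mathcal{Q}=0\sqcup_{\mathcal{C}}0=0$. Yet $\colim\cE_\bu=\mathcal{C}\times\mathcal{C}\to\mathcal{C}=\colim\cF_\bu$ is $(x,y)\mapsto x\oplus y$, which is not fully faithful.

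The example also shows you spend the acyclicity of $\cD'$ in the wrong place, since $\mathcal{Q}=0$ is automatic. Compute $\mathcal{Q}$ as the limit of right adjoints and take a compatible family $(x_J)$. For $k\in K$, $x_\vn$ is the image of $x_{\{k\}}\in\mathcal{Q}_{\{k\}}=0$. The transitions $\mathcal{Q}_\vn\to\mathcal{Q}_J$ for $J\cap K=\vn$ are equivalences. Hence all $x_J\simeq 0$, and the limit is $0$ by joint conservativity.

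Your relative-chains formula $\mathcal{Q}\simeq\mathcal{Q}_\vn\ot C_*(\cD,\cD')$ has no meaning in the non-stable $\Pr^{L}$. In the toy example $H_1(\cD,\cD')\neq 0$ while $\mathcal{Q}=0$. Acyclicity of $\cD'$ is needed precisely for the full faithfulness you took for granted.

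The paper obtains it by checking that $\cE_\bu\to\cF_\bu$ is a weak elementary expansion along $\cD'\incl\cD$:
\begin{itemize}
\item $t_J$ is an equivalence for $J\in\cD'$;
\item for $J\notin\cD'$, the square with $\cE_\vn\to\cF_\vn$ is a pushout. This follows from condition (2), which makes $\alpha_w$ an equivalence, together with [LNY, Cor.~A.2.2].
\end{itemize}
Then [LNY, Prop.~A.4.6], whose hypothesis is the acyclicity of $\cD'$, gives the equivalence on colimits.

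Two smaller slips. Extension by zero from a down-closed subposet is a right Kan extension, not a left one. Also, $w$ is the maximal, not minimal, element of $wW_K$. Finiteness of $W_K$ comes from writing $w=w'v$ with $v$ maximal in $W_K$.
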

\begin{proof}
We extend the partial order on $Y'-Y$ to a total order, and list its elements in increasing order
\begin{equation*}
w_{1}, w_{2}, \cdots, w_{n}.
\end{equation*}
Here increasing implies that for $1\le i<j\le n$,  either $w_{i}<w_{j}$ for the Bruhat order, or they are incomparable. Let $Y_{i}=Y\cup \{w_{1},\cdots, w_{i}\}$ for $i=1,2,\cdots, n$, which is down-closed by construction. We need to show that the transition functors $\cM[Y_{i}]\to \cM[Y_{i+1}]$ is an equivalence. We thus reduce to the case where $Y'-Y$ is a singleton $\{w\}$. 

Let $\cE_{\bu}: \cD\ni J\to \cE_{J}:=\cH_{YW_{J}}\ot_{\cH_{J}}\cN$ and $\cF_{\bu}: \cD\ni J\to \cF_{J}:=\cH_{Y'W_{J}}\ot_{\cH_{J}}\cN$ be functors $\cD\to \Pr^{L}$. We have a morphism in $\Fun(\cD,\Pr^{L})$
\begin{equation*}
t:\cE_{\bu}\to \cF_{\bu}
\end{equation*}
such that the induced map $\colim_{\cD}t: \colim_{\cD}\cE_{\bu}\to \colim_{\cD}\cF_{\bu}$ gives the transition functor $t^{Y'}_{Y}$.

Let $R_{w}$ be the right descent set of $w$, i.e.,
\begin{equation*}
R_{w}=\{i\in I|ws_{i}<w\}.
\end{equation*}
Since $w\ne 1$, $R_{w}\ne \vn$. 

Let $\cD'=\{J\sft I|J\cap R_{w}\ne\vn\}$. Then $\cD'$ is an up-closed subset of $\cD$. The inclusion $\cD'\incl \cD$ is a weak elementary expansion in the sense of \cite[Def. A.4.2]{LNY}. Recall that for a subset $\cI'$ of a poset $\cI$, the inclusion $\cI'\incl \cI$ is called a weak elementary expansion if
\begin{enumerate}
\item[(a)] $\cI\bs \cI'$ contains a unique minimal element which we denote by $0$;
\item[(b)] $\cI'_{>0}=\{x\in \cI'|x>0\}$ is up-closed in $\cD$, and is acyclic (i.e., the geometric realization $|\cI'_{>0}|$ is contractible).
\end{enumerate}
In our situation, $\cD-\cD'$ has a minimal element $\vn\in \cD$ since $R_{w}\ne \vn$. Moreover, $\cD'=\cD'_{>\vn}$ is acyclic by Lemma \ref{l:D' ac} below.

In Lemma \ref{l:expansion}, we check that $t:\cE_{\bu}\to \cF_{\bu}$ is a weak elementary expansion along $\cD'\incl \cD$ in the sense of \cite[Def. A.4.5]{LNY}. By \cite[Prop. A.4.6]{LNY}, we conclude that $\colim_{\cD}\cE_{\bu}\to \colim_{\cD}\cF_{\bu}$ is an equivalence, hence the same is true for $t_{Y}^{Y'}$. 
\end{proof}

\begin{lemma}\label{l:D' ac}
\begin{enumerate}
\item The subset $R_{w}\subset I$ is of finite type.
\item The sub-poset $\cD'=\{J\sft I|J\cap R_{w}\ne \vn\}$ of $\cD$ is acyclic.
\end{enumerate}
\end{lemma}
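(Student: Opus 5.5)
For part (1), I will use the standard fact about Coxeter groups that for $w\in W$ and $J\subset I$ with $ws_i<w$ for all $i\in J$, the parabolic subgroup $W_J$ is finite and $w$ factors as $w=w'w_J$ with $w_J$ the longest element of $W_J$ and $\ell(w)=\ell(w')+\ell(w_J)$. Concretely, I would argue as follows. Let $w^J$ be the minimal-length representative of $wW_J$ and write $w=w^J v$ with $v\in W_J$ and $\ell(w)=\ell(w^J)+\ell(v)$. For each $i\in R_w=J$ we have $\ell(ws_i)<\ell(w)$, hence $\ell(vs_i)<\ell(v)$. Thus $v\in W_J$ has every generator of $J$ in its right descent set. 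Then every element of $W_J$ is $\le v$ in the weak order, so $\ell(u)\le\ell(v)$ for all $u\in W_J$. A Coxeter group with bounded length function is finite, so $R_w\sft I$. (Alternatively, I would simply cite Bourbaki or Björner--Brenti for this standard fact.)

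For part (2), let $R=R_w$, which is of finite type and nonempty since $w\ne 1$. I would show $|\cD'|$ is contractible by a Quillen-type homotopy argument. Consider the monotone map $f:\cD'\to\cD'$ given by $f(J)=J\cap R$. This lands in $\cD'$: the set $J\cap R$ is nonempty by definition of $\cD'$, and it is of finite type as a subset of $R$. Moreover $f(J)\subset J$ for all $J$. By the standard fact that comparable poset maps $f\le \id$ induce homotopic maps on nerves, $|\cD'|$ deformation retracts onto $|f(\cD')|$, which is the poset $\cP^{*}(R)$ of nonempty subsets of $R$. Next, $\cP^{*}(R)$ has a maximal element $R$ itself, since $R\in\cD'$ because it is finite type and nonempty. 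Hence $\cP^{*}(R)$ is contractible, and so is $|\cD'|$. Equivalently, one can combine the chain $J\supset J\cap R\subset R$ to contract $\cD'$ directly to the point $R$ by a zigzag of natural transformations.

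The only step with real content is (1), namely the finiteness of $W_{R_w}$. I expect the cleanest route is to quote the standard result that $W_J$ is finite whenever $J$ lies in a right descent set, rather than reprove it. Part (2) is formal once (1) guarantees that $R\in\cD'$, which is exactly where finiteness is used.
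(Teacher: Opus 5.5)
Your proof is correct and follows the paper's argument. Part (1) is the same decomposition $w=w^{J}v$, reduced to the standard fact that an element of $W_J$ whose right descent set is all of $J$ forces $W_J$ to be finite; the paper also uses this fact without proof. Part (2) uses the same map $J\mapsto J\cap R_w$ onto the nonempty subsets of $R_w$. The only difference is how the homotopy equivalence is justified: you use the elementary homotopy from $J\cap R_w\subset J$ (or the zigzag $J\supset J\cap R_w\subset R_w$), while the paper applies Quillen's Theorem A after noting that each fiber $\ph^{-1}(\cR_{\supset K})$ has least element $K$.
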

\begin{proof}
(1) Let $J=R_{w}$ and let $w'\in wW_{J}$ be the minimal length element. Write $w=w'v$ for $v\in W_{J}$. By construction, for any $j\in J=R_{w}$, we have $w'vs_{j}=ws_{j}<w=w'v$. In particular, $\ell(w')+\ell(vs_{j})=\ell(w'vs_{j})<\ell(w)=\ell(w')+\ell(v)$, which implies $\ell(vs_{j})<\ell(v)$. This being true for all $j\in J$, we conclude that $v$ is a maximal element in $W_{J}$, which exists only when $W_{J}$ is finite, i.e., $J$ is of finite type.

(2) Let $\cR$ be the set of non-empty subsets of $R_{w}$, partially ordered by inclusion. Consider the monotone map $\ph: \cD'\to \cR$ sending $J\in \cD'$ to $J\cap R_{w}$. For any $K\in \cR$, we have $\ph^{-1}(\cR_{\supset K})$ has a minimal element $K$, hence acyclic. By Quillen's Theorem A \cite{Q}, $|\ph|: |\cD'|\to |\cR|$ is homotopy equivalence. Now $|\cR|$ is a standard simplex hence contractible. Therefore $|\cD'|$ is also contractible, and $\cD'$ is acyclic. 
\end{proof}

\begin{lemma}\label{l:expansion}
The map $t: \cE_{\bu}\to \cF_{\bu}$ is a weak elementary expansion along $\cD'\incl \cD$ in the sense of \cite[Def.A.4.5]{LNY}: i.e.,
\begin{enumerate}
\item For $J\in \cD'$, the map $t_{J}: \cE_{J}\to \cF_{J}$ is an equivalence.
\item For $J\notin \cD'$, the following a pushout diagram
\begin{equation}\label{EF push}
\xymatrix{\cE_{\vn}\ar[r]^{t_{\vn}}\ar[d] & \cF_{\vn}\ar[d]\\
\cE_{J}\ar[r]^{t_{J}} & \cF_{J}}
\end{equation}
\end{enumerate}

\end{lemma}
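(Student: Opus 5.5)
The plan is to compare $\cE_J$ and $\cF_J$ through the recollement. Since $Y$ and $Y'=Y\sqcup\{w\}$ are down-closed, $YW_J\subset Y'W_J$ are down-closed unions of right $W_J$-cosets. By assumption (1) the recollement $\cH_{YW_J}\to\cH_{Y'W_J}\to\cH_{Y'W_J-YW_J}$ is a recollement of right $\cH_J$-modules. Tensoring with $\cN$ over $\cH_J$ therefore gives a recollement
\begin{equation*}
\cE_J\to\cF_J\to\cH_{Y'W_J-YW_J}\ot_{\cH_J}\cN.
\end{equation*}
So $t_J$ is the closed embedding of a recollement, and both parts of the lemma reduce to understanding the open piece $\cH_{Y'W_J-YW_J}$.

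For (1), take $J\in\cD'$ and pick $i\in J\cap R_w$. Then $ws_i<w$, and since $Y$ is down-closed, $ws_i\in Y$. Hence $w\in ws_iW_J\subset YW_J$, so $Y'W_J=YW_J$. The inclusion $\cH_{YW_J}\to\cH_{Y'W_J}$ is then the identity, and $t_J$ is an equivalence.

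For (2), take $J\notin\cD'$, so $J\cap R_w=\vn$. Then $w$ is minimal in $wW_J$: a minimal element $w'$ with $w=w'v$ and $v\ne1$ would have some $j\in J$ with $\ell(vs_j)<\ell(v)$, forcing $ws_j<w$. Next I claim $wW_J\cap YW_J=\vn$. If $wu\in yW_J$ with $y\in Y$, replace $y$ by the minimal element of its coset, which still lies in $Y$ by down-closedness. That minimal element must be $w$ itself, since $w$ is minimal in the same coset. This contradicts $w\notin Y$. Thus $Y'W_J-YW_J=wW_J$, and it is locally closed. By assumption (2), $\cH_{wW_J}\simeq\cH_w\ot_{\cH_\vn}\cH_J$ as right $\cH_J$-modules, so the open piece of $\cF_J$ is $\cH_w\ot_{\cH_\vn}\cN$. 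For $J=\vn$ the same argument gives a recollement $\cE_\vn\to\cF_\vn\to\cH_w\ot_{\cH_\vn}\cN$. The vertical maps in \eqref{EF push} are the induction maps $\cH_{Y}\ot_{\cH_\vn}\cN\to\cH_{YW_J}\ot_{\cH_J}\cN$, and these are compatible with the recollements. On the open quotients the induced map is the identification just obtained, hence an equivalence.

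The hardest step is the final one: showing that a map of recollement sequences whose closed parts are arbitrary but whose open parts agree forms a pushout square in $\Pr^L$. My first approach is to argue directly with the recollement structure. Both $\cF_\vn$ and $\cF_J$ are recollements glued from their closed part and the common open part $\cU=\cH_w\ot_{\cH_\vn}\cN$ via the gluing functor $i^!j_*$ (equivalently $i^*j_!$ on the left-adjoint side). For the square, the gluing functor for $\cF_J$ should be the composite of the gluing functor for $\cF_\vn$ with the map $\cE_\vn\to\cE_J$, because $j_!$ is computed in $\cF_\vn$ and then pushed forward. In $\Pr^L$ a recollement presents the total category as the lax limit, or equivalently the oplax pushout, of its closed and open pieces along $i^*j_!$. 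Changing the closed piece along a colimit-preserving functor that is compatible with $i^*j_!$ is then a pushout, because the glued category is $\cU\sqcup_{\cU}\cE$-type data with the closed part enlarged. Concretely, I would verify that the square is a pushout by checking that the induced functor $\cE_J\sqcup_{\cE_\vn}\cF_\vn\to\cF_J$ is compatible with the recollements and is an equivalence on both strata. The closed stratum gives $\cE_J$ and the open stratum gives $\cU$, which suffices. If this general statement already appears in \cite[App.~A]{LNY}, which seems likely given the definitions cited, I would invoke it directly.
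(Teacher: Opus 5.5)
Your proposal is correct and follows the paper's proof essentially step for step. Part (1) goes through $ws_i\in Y$, giving $YW_J=Y'W_J$. Part (2) uses the minimality of $w$ in $wW_J$, the decomposition $Y'W_J=YW_J\sqcup wW_J$, and the map of recollements whose open parts are identified by assumption (2). For the final pushout step the paper simply invokes \cite[Cor.~A.2.2]{LNY}, which is exactly the general recollement statement you anticipated, so your sketched direct gluing argument is not needed.
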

\begin{proof}
(1) For $J\in\cD'$, i.e., $s_{i}\in J$ for some simple reflection $s_{i}$ such that $ws_{i}<w$, we have $ws_{i}\in Y$ (since $Y'$ is down-closed), hence we have $wW_{J}\subset YW_{J}$ and $YW_{J}=Y'W_{J}$. This implies the transition functor $t_{J}: \cE_{J}\to \cF_{J}$ is an equivalence. 

(2) For $J\notin \cD'$, we first claim that $wW_{J}\cap Y=\vn$. Indeed, $J\notin \cD'$ means $J\cap R_{w}=\vn$. For any $i\in J$, we then have $ws_{i}>w$. This implies that $w$ is the minimal element in $wW_{J}$. Since $w\notin Y$ and $Y$ is down-closed, we conclude that $wW_{J}\cap Y=\vn$, or $Y'W_{J}=YW_{J}\coprod wW_{J}$. Consider the commutative diagram where the two rows are recollements of $\cF_{\vn}$ and $\cF_{J}$
\begin{equation}\label{map of rec}
\xymatrix{\cH_{Y}\ot_{\cH_{\vn}}\cN \ar[r]^{t_{\vn}}\ar[d] & \cH_{Y'}\ot_{\cH_{\vn}}\cN\ar[r]\ar[d] & \cH_{w}\ot_{\cH_{\vn}}\cN\ar[d]^{\a_{w}}\\
\cH_{YW_{J}}\ot_{\cH_{J}}\cN\ar[r]^{t_{J}} & \cH_{Y'W_{J}}\ot_{\cH_{J}}\cN\ar[r] & \cH_{wW_{J}}\ot_{\cH_{J}}\cN}
\end{equation}
By Assumption \eqref{Hw free} on $\cH$, using that $w$ is minimal in $wW_{J}$, we have $\cH_{w}\ot_{\cH_{\vn}}\cH_{J}\isom \cH_{wW_{J}}$ as right $\cH_{J}$-modules. Tensoring with $\cN$ we see that $\a_{w}:  \cH_{w}\ot_{\cH_{\vn}}\cN\to \cH_{wW_{J}}\ot_{\cH_{J}}\cN$ is an equivalence. By \cite[Cor.A.2.2]{LNY}, the left square in \eqref{map of rec}, which is the same as \eqref{EF push}, is a pushout diagram.

%Note that for any $J\in \cD$, $\cH_{wW_{J}}$ is free of rank one over $\cH_{J}$ as a right module. Moreover, for a free generator $\d_{w}$ for $\cH_{w}$ (for example, we can take the constant sheaf on the Schubert cell $\Fl_{\cG,w}$ when $\cH$ is the version of the Hecke category as in \S\ref{ss:TT}) as a $\cH_{\vn}$-module, its image in $\cH_{wW_{J}}$ also serve as a free generator of $\cH_{wW_{J}}$ as a $\cH_{J}$-module. Using $\d_{w}$ we can identify both $\cH_{w}\ot_{\cH_{\vn}}\cN$ and $\cH_{wW_{J}}\ot_{\cH_{J}}\cN$ with $\cN$, in such a way that the functor $\a_{w}$ becomes the identity on $\cN$. 

\end{proof}

\subsection{Proof of Theorem \ref{th:abs Om}}

\sss{Construction of $\wt\cH^{\Morita,\rhd}_{\bu}$}
The functor $\wt\cH^{\Morita,\rhd}_{\bu}$ sends $J\in \cD$ to $\cH_{J}\lmod$, and sends the cone vertex $\star$ to $\wt\cH$. For a morphism $\om\in\Hom_{\cD/\Om}(J,J')$, i.e., $\om\in\Om$ such that $\om(J)\subset J'$, the corresponding morphism $\cH_{J}\lmod\to \cH_{J'}\lmod$ in the Morita category is given by the $(\cH_{J'},\cH_{J})$-bimodule
\begin{equation*}
\wt\cH_{W_{J'}\om W_{J}}=\wt\cH_{W_{J'}\om}.
\end{equation*}
Here we use that $W_{J'}\om W_{J}=W_{J'}W_{\om(J)}\om=W_{J'}\om$. The unique morphism $J\to \star$ is sent to the $(\wt\cH, \cH_{J})$-bimodule $\wt\cH$ where $\wt\cH$ and $\cH_{J}$ act by left and right convolution. 

For two composable morphisms $\om_{1}: J\to J'$ and $\om_{2}: J'\to J''$ in $\cD/\Om$, convolution gives a canonical isomorphism of $(\cH_{J''}, \cH_{J})$-bimodules
\begin{equation*}
\wt\cH_{W_{J''}\om_{2}}\ot_{\cH_{J'}}\wt\cH_{W_{J'}\om_{1}}\isom \cH_{W_{J''}\om_{2}\om_{1}}.
\end{equation*}
Indeed, using \eqref{right om} we can write $\wt\cH_{W_{J'}\om_{1}}$ as $\cH_{J'}\ot_{\cH_{\vn}}\wt\cH_{\om_{1}}$. Hence convolution gives 
\begin{equation*}
\wt\cH_{W_{J''}\om_{2}}\ot_{\cH_{J'}}\wt\cH_{W_{J'}\om_{1}}\cong\wt\cH_{W_{J''}\om_{2}}\ot_{\cH_{J'}}\cH_{J'}\ot_{\cH_{\vn}}\wt\cH_{\om_{1}}\cong \wt\cH_{W_{J''}\om_{2}}\ot_{\cH_{\vn}}\wt\cH_{\om_{1}}\cong \wt\cH_{W_{J''}\om_{2}\om_{1}}.
\end{equation*}
Similarly, for $\om: J\to J'$ in $\cD/\Om$, convolution gives an isomorphism of $(\wt\cH,\cH_{J})$-bimodules
\begin{equation*}
\wt\cH\ot_{\cH_{J'}}\cH_{W_{J'}\om}\cong \wt\cH\ot_{\cH_{J'}}\cH_{J'}\ot_{\cH_{\vn}}\wt\cH_{\om}\cong \wt\cH\ot_{\cH_{\vn}}\wt\cH_{\om}\cong \wt\cH.
\end{equation*}
These canonical isomorphisms, together with their obvious analogues for chains of composable morphisms in $\cD/\Om$, give the desired functor $\wt\cH^{\Morita,\rhd}_{\bu}$. Higher compatibilities between these isomorphisms of bimodules follow from the monoidal structure of $\wt\cH$.

%More formally, one has to check the higher compatibilities. But this is a standard consequence of the existence of a lax-symmetric monoidal functor of sheaves $\Sh:\Corr(\textup{Sch}_{ft})\to \Pr^L$.

\sss{Proof of Theorem \ref{th:abs Om}}
Consider the category  $\cD^{\rhd}/\Om$, where the action of $\Om$ is trivial on the cone vertex which we denote by $\star_{1}$. For $J\in \cD$, we have $\Hom(J, \star_{1})=\Om$. Form the cone $(\cD^{\rhd}/\Om)^{\rhd}$ and denote the new cone vertex which we denote by $\star$. Clearly there is an embedding $(\cD/\Om)^{\rhd}\incl (\cD^{\rhd}/\Om)^{\rhd}$ induced by the $\Om$-equivariant embedding $\cD\incl\cD^{\rhd}$. We extend the functor $\wt\cH^{\Morita,\rhd}_{\bu}$ to a functor
\begin{equation*}
\wt\cH^{\Morita,\rhd\rhd}_{\bu}: (\cD^{\rhd}/\Om)^{\rhd}\to \Morita(\Pr^{L})
\end{equation*}
as follows. It sends $\star_{1}$ to $\cH\lmod$. Each $\om\in \Hom(J,\star_{1})=\Om$ is sent to the $(\cH,\cH_{J})$-bimodule $\wt\cH_{W\om}$. Each $\om\in\Hom(\star_{1},\star)=\Om$ is sent to the $(\wt\cH,\cH)$-bimodule $\wt\cH$ by left and right convolution. The identity $(J\xr{\om}J'\xr{\om'}\star_{1})=(J\xr{\om'\om}\star_{1})$ of morphisms in $\cD^{\rhd}/\Om$ corresponds to the isomorphism of $(\cH, \cH_{J})$-bimodules 
\begin{equation*}
\wt\cH_{W\om'}\ot_{\cH_{J'}}\cH_{W_{J'}\om}\cong \wt\cH_{W\om'}\ot_{\cH_{J'}}\cH_{J'}\ot_{\cH_{\vn}}\wt\cH_{\om}\cong \wt\cH_{W\om'}\ot_{\cH_{\vn}}\wt\cH_{\om}\cong \wt\cH_{W\om'\om}.
\end{equation*}
We omit giving the other isomorphisms of bimodules corresponding to morphism compositions in $(\cD^{\rhd}/\Om)^{\rhd}$ that involve $\star_{1}$.

Let $\pi: \cD/\Om\to \star_{1}/\Om$, and let $\pi_{!}$ (resp. $\pi_{*}$) denote the left (resp. right) Kan extension along $\pi$. The functor $\wt\cH^{\Morita,\rhd\rhd}_{\bu}$ restricted to $\cD^{\rhd}/\Om$ induces a map in $\Fun(\cD/\Om, \Pr^{L})$
\begin{equation*}
\wt\a^{L}: \wt\cH^{\Morita}_{\bu}\to \pi^{*}\un{\cH\lmod}_{\star_{1}/\Om}
\end{equation*}
where $\un{\cH\lmod}_{\star_{1}/\Om}$ is the restriction of $\wt\cH^{\Morita,\rhd\rhd}_{\bu}$ to $\star_{1}/\Om$, i.e., the object $\cH\lmod$ with $\Om$-action given by the bimodules $\{\wt\cH_{W\om}\}_{\om\in\Om}$. Passing to the right adjoint and using the adjunction $(\pi^{*}, \pi_{*})$ we get a morphism in $\Fun(\star_{1}/\Om,\Morita(\Pr^{L}))$
\begin{equation*}
\wt\a:\un{\cH\lmod}_{\star_{1}/\Om} \to\pi_{*}\wt\cH^{\Morita}_{\bu}.
\end{equation*}
Taking limits over $\star_{1}/\Om$ we obtain
\begin{equation*}
\a: (\cH\lmod)^{\Om}:=\lim_{\star_{1}/\Om}\cH\lmod\to \lim_{J\in(\cD/\Om)^{\opp}}\cH_{J}\lmod.
\end{equation*}
On the other hand, restricting $\wt\cH^{\Morita,\rhd\rhd}_{\bu}$ to $(\star_{1}/\Om)^{\rhd}$ induces a functor
\begin{equation*}
\b^{L}: \colim_{\star_{1}/\Om}\cH\lmod \to \wt\cH\lmod.
\end{equation*}
Passing to the right adjoint, we get
\begin{equation*}
\b: \wt\cH\lmod\to \lim_{\star_{1}/\Om}\cH\lmod=(\cH\lmod)^{\Om}.
\end{equation*}
The composition 
\begin{equation*}
\wt\cH\lmod\xr{\b}(\cH\lmod)^{\Om}\xr{\a}\lim_{J\in(\cD/\Om)^{\opp}}\cH_{J}\lmod
\end{equation*}
is the functor $\r$ in the statement of the theorem, because $\r$ is induced from $\wt\cH^{\Morita,\rhd}_{\bu}$, while $\a\c\b$ is induced from the the restriction of $\wt\cH^{\Morita,\rhd\rhd}_{\bu}$ to $(\cD/\Om)^{\rhd}$, which is exactly $\wt\cH^{\Morita,\rhd}_{\bu}$. Therefore it suffices to show that both $\a$ and $\b$ are equivalences.

%induces morphisms in $\Morita(\Pr^{L})$
%\begin{equation*}
%\colim_{J\in\cD/\Om}\cH_{J}\lmod\xr{\a} \colim_{\star_{1}/\Om}\cH\lmod \xr{\b} \wt\cH\lmod.
%\end{equation*}
%Passing to right adjoints (restriction functors) we get morphisms in $\Morita(\Pr^{L})$
%\begin{equation*}
%\wt\cH\lmod\xr{\b^{R}}\lim_{\star_{1}/\Om}\cH\lmod=(\cH\lmod)^{\Om}\xr{\a^{R}}\lim_{J\in(\cD/\Om)^{\opp}}\cH_{J}\lmod
%\end{equation*}
%whose composition is the functor $\r$ which we would like to show to be an equivalence. 
%
%We claim that both $\a^{R}$ and $\b^{R}$ are equivalences. Let $\pi: \cD/\Om\to \star_{1}/\Om$, and let $\pi_{*}$ denote the right Kan extension along $\pi$. The restriction of $\wt\cH^{\Morita,\rhd\rhd}_{\bu}$ to $\cD^{\rhd}/\Om$ induces a morphism in $\Pr^{L}$
%\begin{equation*}
%\wt\a^{R}:\un{\cH\lmod}_{\pt/\Om} \to\pi_{*}\wt\cH^{\Morita}_{\bu}.
%\end{equation*}
%The first term above means the restriction of $\wt\cH^{\Morita,\rhd\rhd}_{\bu}$ to $\pt/\Om=\star_{1}/\Om$, i.e., the object $\cH\lmod$ with $\Om$-action given by the bimodules $\{\wt\cH_{W\om}\}_{\om\in\Om}$. 

We claim that $\wt\a$ is an equivalence as functors $\star_{1}/\Om\to \Morita(\Pr^{L})$. To check this, it suffices to check it after pullback to $\pt$, which becomes the canonical restriction functor $\cH\lmod\to \lim_{J\in \cD^{\opp}}\cH_{J}\lmod$ induced by \eqref{Hrhd0.5}, and the latter is an equivalence by Corollary \ref{c:rho equiv}. Since $\a$ is obtained from $\wt\a$ by taking limits over $\star_{1}/\Om$, $\a$ is also an equivalence.

Finally we show that $\b$ is an equivalence. Consider the induction-restriction adjunction
\begin{equation*}
\xymatrix{\ind: \cH\lmod\ar@<.5ex>[r] & \ar@<.5ex>[l]\wt\cH\lmod : \res}
\end{equation*}
The monad $\res\c\ind: \cH\lmod\to \cH\lmod$ is given by the $\cH$-bimodule $\wt\cH=\op_{\om\in\Om}\wt\cH_{W\om}$, the summands of the latter is our definition of the $\om\in \Om$-action on $\cH\lmod$. Therefore 
\begin{equation}\label{compare monads}
\Mod_{\res\c\ind}(\cH\lmod)\cong (\cH\lmod)^{\Om}.
\end{equation}
Since $\res$ is conservative, Barr-Beck-Lurie theorem implies that $\res$ induces an equivalence $\wt\cH\lmod\isom\Mod_{\res\c\ind}(\cH\lmod)$, which via \eqref{compare monads} identifies with $\b$. Therefore $\b$ is an equivalence. The proof of Theorem \ref{th:abs Om} is complete.

\subsection{Variant: more general diagrams}\label{ss:gen diagram}

With applications to \cite{LNY-Coxeter} and \cite{LNY-Ell} in mind,  we develop variants of Theorem \ref{th:Om} where one changes the index category $\cD/\Om$ to a poset coming from a subdivision of the fundamental alcove of $LG$.

\sss{Index poset} %The argument of Theorem \ref{th:main coarse} does not use specifics about the barycentric subdivision $\sd(\cD)$ except that is it fine enough so that Lemma \ref{l:left cov} holds.  We now state a more general version of Theorem \ref{th:main coarse} that will be used in \cite{LNY-Coxeter}.

We are in the setup of \S\ref{sss:tw loop group}. Let $A_{0}\subset \xcoch(T)_{\RR}$ be the fundamental alcove corresponding to the Iwahori subgroup $\bI$. The simplicies of $A_{0}$ are in canonical bijection with $\cD$.

Recall that a poly-simplicial complex $\Sigma$ with an action of a discrete group $\Gamma$ is called a {\em $\G$-semiregular} if for any $\g\in \Gamma$, and any closed simplex $\sigma$ of $\Sigma$, the intersection  $\sigma \cap \g \sigma$ is fixed by $\g$ pointwise.

Let $\Sig$ be a $\Om$-semiregular subdivision of the fundamental alcove $A_{0}$. For example, the barycentric subdivision of $A_{0}$ is $\Om$-semiregular. Let $\cD_{\Sig}$ be the {\em opposite} poset of simplices in $\Sig$. Let $\ov{\cD_{\Sig}}$ be the coarse quotient of $\cD_{\Sig}$ by $\Om$: its elements are $\Om$-orbits in $\cD_{\Sig}$ (which are denoted by $[\s]$ for $\s\in \cD_{\Sig}$), and $[\s]\le [\s']$ if and only if there exists representatives $\s$ and $\s'$ in their $\Om$-orbits such that $\s\le \s'$ in $\cD$ (i.e., $\s\supset \s'$). 

Let $p: \cD_{\Sig}\to \ov{\cD_{\Sig}}$ be the natural map of posets and let $p^{\rhd}: \cD_{\Sig}^{\rhd}\to \ov{\cD_{\Sig}}^{\rhd}$ be the induced map on the right cones. 

Let $\cI$ and $\cJ$ be posets. A monotone map $f:\cI\to \cJ$ is called {\em coCartesian} if for any $i\in \cI$ and $f(i)\le j'\in J$, the set $\{i'\in \cI|i\le i',j'\le f(i')\}$ has a unique minimal element, and that element maps to $j'$. It is called a {\em left covering map} if there is a unique $i'\in f^{-1}(j')$ such that $i\le i'$. A left covering map is coCartesian. 

\begin{lemma}\label{l:left cov gen}
The canonical map $p:\cD_{\Sig}\to \ov{\cD_{\Sig}}$ is a left covering map, hence coCartesian. In particular, the induced map $\ov p:\cD_{\Sig}/\Om\to \ov{\cD_{\Sig}}$ is coCartesian.
\end{lemma}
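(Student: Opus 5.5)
The plan is to verify the left covering property directly from the definitions, using $\Om$-semiregularity of $\Sig$.

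\textbf{Unpacking the condition.} Let $\s\in\cD_{\Sig}$ and let $[\s']\in\ov{\cD_{\Sig}}$ with $p(\s)=[\s]\le[\s']$. By definition of the order on $\ov{\cD_{\Sig}}$, there are $\om_1,\om_2\in\Om$ with $\om_1\s\supset\om_2\s'$. Translating by $\om_1^{-1}$, we get $\s\supset\om\s'$ for $\om=\om_1^{-1}\om_2$. So there is some $\tau:=\om\s'$ in the fiber $p^{-1}([\s'])$ with $\s\le\tau$ in $\cD_{\Sig}$, i.e.\ $\tau$ a face of $\s$. This gives existence.

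\textbf{Uniqueness.} This is the main point. Suppose $\tau_1=\g_1\s'$ and $\tau_2=\g_2\s'$ are both faces of $\s$, with $\g_i\in\Om$. Put $\g=\g_2\g_1^{-1}$, so $\tau_2=\g\tau_1$. I would show $\tau_2=\tau_1$ as follows.

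First, the closed simplex $\s$ lies in $A_0$. Since $\Om$ acts on $A_0$ (it normalizes $\bI$, hence preserves the fundamental alcove), all the translates $\g^k\s$ lie in $A_0$.

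Apply semiregularity to $\s$ and $\g$: the set $\s\cap\g\s$ is fixed pointwise by $\g$. Now $\tau_2=\g\tau_1\subset\g\s$, and $\tau_2\subset\s$. Hence $\tau_2\subset\s\cap\g\s$, so $\g$ fixes $\tau_2$ pointwise.

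Next apply the same argument to $\g^{-1}$, which is also in $\Om$. We have $\tau_1=\g^{-1}\tau_2\subset\g^{-1}\s$ and $\tau_1\subset\s$. So $\tau_1\subset\s\cap\g^{-1}\s$, and $\g^{-1}$ fixes $\tau_1$ pointwise.

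Therefore $\tau_2=\g\tau_1=\tau_1$. So the $\tau\in p^{-1}([\s'])$ with $\s\le\tau$ is unique, and $p$ is a left covering map. By the remark preceding the lemma, left covering maps are coCartesian.

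\textbf{The statement about $\ov p$.} The category $\cD_{\Sig}/\Om$ has the same objects as $\cD_{\Sig}$, and its morphisms $\s\to\s''$ are the $\om\in\Om$ with $\om\s\le\s''$. Since $\ov p$ is not a map of posets, I interpret "coCartesian" here via the Grothendieck-construction sense. The required input is that, for each $\s$ and each $[\s']\ge p(\s)$, there is an initial arrow from $\s$ into the fiber over $[\s']$, within the over-category.

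I would take the candidate $\s\xrightarrow{\id}\tau$, where $\tau$ is the unique face constructed above.

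To check it is initial, consider any morphism $\om:\s\to\s''$ with $[\s'']\ge[\s']$. Then $\om\tau$ is a face of $\om\s$, hence a face of $\s''$, and it lies in the orbit $[\s']$. So $\om$ also defines a morphism $\tau\to\s''$.

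Uniqueness of this factorization comes from two facts. The uniqueness argument above pins down $\tau$. Any factorization through $\tau$ must therefore use the same group element $\om$.

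I expect the only real obstacle to be this bookkeeping with group elements in $\cD_{\Sig}/\Om$. If the intended notion is simply that each morphism $\om:\s\to\s''$ factors as $\s\to\tau\xrightarrow{\om}\s''$ with $\tau$ over $[\s']$, compatibly with composition, the left covering property makes it immediate.
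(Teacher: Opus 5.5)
Your proof that $p$ is a left covering map is correct and is essentially the paper's argument. Existence comes from the definition of the order on $\overline{\mathcal{D}_\Sigma}$, and uniqueness from semiregularity applied to $\sigma\cap\gamma^{\pm 1}\sigma$. The paper uses only the application to $\omega^{-1}$; either one of your two applications already forces $\tau_1=\tau_2$.

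The problem is your paragraph on $\overline{p}$, which the paper states as an unproved ``in particular''. Since $\mathcal{D}_\Sigma$ is the \emph{opposite} of the face poset, a morphism $\omega\colon\sigma\to\sigma''$ in $\mathcal{D}_\Sigma/\Omega$ means $\sigma''\subset\omega\sigma$. Likewise, $\omega\in\mathrm{Hom}(\tau,\sigma'')$ means $\sigma''\subset\omega\tau$.

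Your step ``$\omega\tau$ is a face of $\omega\sigma$, hence a face of $\sigma''$'' reads these morphisms backwards. It needs $\omega\sigma\subset\sigma''$, and it concludes $\omega\tau\subset\sigma''$, which is not what is required. With the correct reading, $\omega\tau$ and $\sigma''$ are just two faces of $\omega\sigma$, and $\sigma''\subset\omega\tau$ does not follow by transitivity. It needs the uniqueness a second time:
\begin{enumerate}
\item Since $\omega\tau\in[\sigma']\le[\sigma'']$, the left covering property at $\omega\tau$ gives a face $\kappa\subset\omega\tau$ with $\kappa\in[\sigma'']$.
\item Then $\kappa$ and $\sigma''$ are both faces of $\omega\sigma$ lying in the orbit $[\sigma'']$.
\item Since $[\omega\sigma]=[\sigma]\le[\sigma'']$, the left covering property at $\omega\sigma$ forces $\kappa=\sigma''$.
\end{enumerate}
Hence $\sigma''\subset\omega\tau$. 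So precomposition with $\mathrm{id}\colon\sigma\to\tau$ is a bijection $\mathrm{Hom}(\tau,\sigma'')\to\mathrm{Hom}(\sigma,\sigma'')$, and $\mathrm{id}\colon\sigma\to\tau$ is a $\overline{p}$-coCartesian lift of $[\sigma]\le[\sigma']$. With this repair your argument for the last sentence of the lemma is complete.
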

\begin{proof} For $[\s]\le [\s']$ in $\ov{\cD_{\Sig}}$, and $\s\in[\s]$, we need to show that there is a unique $\s'\in [\s']$ such that $\s\le \s'$ in $\cD_{\Sig}$. The existence is clear by the definition of $[\s]\le [\s']$. Now suppose $\s''=\om(\s')\in [\s']$ is another facet with $\s\le \s''$. Then $\s$ contains both $\s'$ and $\s''$, hence $\om^{-1}(\s)\cap \s\supset \s'$.  Since $\Sig$ is $\Om$-semiregular, $\s'$ is fixed pointwise by $\om$, which implies $\s''=\s'$. 
\end{proof}

Let $r: \cD_{\Sig}\to \cD$ be the map sending a simplex $\s$ of $\Sig$ to $J$ where $A_{J}$ is the unique facet containing the interior of $\s$. This is an order-preserving map of posets.

\begin{lemma}\label{l:subdiv cofinal} The map $r:\cD_{\Sig}\to \cD$ is cofinal. 
%In particular, we have a canonical equivalence
%\begin{equation}
%\colim_{\cD_{\Sig}}\cC^{\c\c}_{\Sig}\cong \colim_{\cD}\cC^{\c\c}_{\bu},
%\end{equation}
%and the diagram $\cC^{\c\c,\rhd}_{\Sig}$ exhibits $\cH^{\c}$ as the colimit $\colim_{\cD_{\Sig}}\cC^{\c\c}_{\Sig}$.
\end{lemma}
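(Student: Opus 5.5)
We use Quillen's Theorem A in the form for cofinality: $r:\cD_{\Sig}\to\cD$ is cofinal if and only if, for every $J\in\cD$, the comma poset
\[
\cD_{\Sig,J/}:=\{\s\in\cD_{\Sig}\mid J\le r(\s)\}
\]
is weakly contractible, i.e.\ has contractible nerve. So fix $J\sft I$ and study this subposet.

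First we translate the order conditions into geometry. Recall that $\cD$ is ordered by inclusion of subsets $J$. The facet $A_J$ of the closed alcove $A_0$ is the face where the affine roots in $J$ vanish, so $J\subset J'$ exactly when $A_{J'}\subset \ov{A_J}$. In particular larger $J$ means smaller facet, and $\vn$ corresponds to the open alcove.

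For $\s\in\cD_{\Sig}$, $r(\s)$ is the index of the facet containing the interior of $\s$. Hence $J\le r(\s)$ means the interior of $\s$ lies in a facet contained in $\ov{A_J}$, which is equivalent to $\s\subset \ov{A_J}$. Therefore $\cD_{\Sig,J/}$ is the opposite poset of the simplices of $\Sig$ contained in the closed facet $\ov{A_J}$. Monotonicity of $r$ is the same observation: if $\s\supset\s'$, then the open facet of $\s'$ lies in the closure of that of $\s$.

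Next we check that these simplices form a subdivision of $\ov{A_J}$. Since $\Sig$ subdivides $A_0$ compatibly with its face structure, every simplex of $\Sig$ has interior inside a single facet. Thus the simplices lying in $\ov{A_J}$ form a subcomplex $\Sig_J$ whose union is $\ov{A_J}$.

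The nerve of the face poset of a polyhedral or simplicial complex, or of its opposite, is the barycentric subdivision of that complex. So $|\cD_{\Sig,J/}|\cong|\Sig_J|=\ov{A_J}$, a closed simplex, which is contractible. Theorem A then gives that $r$ is cofinal.

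The main obstacle is the geometric compatibility used in the third paragraph: that each simplex of $\Sig$ has interior in a single facet of $A_0$. This is where "subdivision" must be read as a subdivision of $A_0$ as a stratified space, refining the facet stratification, as the barycentric subdivision does. Once this is granted, the rest is formal. We should also take care with the variance: the paper's definition of $\cD_{\Sig}$ as the opposite poset is exactly what makes the undercategory, rather than the overcategory, the relevant comma poset for colimits over $\cD_{\Sig}$.
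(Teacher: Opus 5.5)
Your proposal is correct and follows essentially the same route as the paper. Both apply Quillen's Theorem A to the comma posets $\cD_{\Sig,\ge J}=\{\s\mid r(\s)\supseteq J\}$, identify each with the (opposite) face poset of the subcomplex of $\Sig$ lying in $\ov{A_J}$, and conclude that its realization is homeomorphic to the contractible set $\ov{A_J}$. The compatibility you flag, that each simplex of $\Sig$ has interior in a single facet, is already built into the paper's definition of $r$. It in fact holds automatically for any linear subdivision of $A_0$.
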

\begin{proof}
We apply Quillen's Theorem A \cite{Q}. We need to show that for any $J\in \cD$, letting $\cD_{\Sig,\ge J}:=\cD_{\Sig}\times_{\cD}\cD_{\ge J}$ (where $\cD_{\ge J}=\{J'\in \cD|J'\supset J\}$) has contractible geometric realization. Now $|\cD_{\ge J}|$ is homeomorphic to the closure $\ov{A_{J}}$ of the $J$-facet of the $A_{0}$, and  $\cD_{\Sig,\ge J}$ is the set of simplices of $\Sig\cap \ov{A_{J}}$, hence $|\cD_{\Sig,\ge J}|$ is homeomorphic to $\ov{A_{J}}$ as well, which is in particular contractible.
\end{proof}

\sss{Construction of $\wt \cH^{\Morita,\rhd}_{\ov{\cD_{\Sig}},\bu}$}
We define a functor
\begin{equation*}
\wt \cH^{\Morita,\rhd}_{\ov{\cD_{\Sig}},\bu}: \ov{\cD_{\Sig}}\to \Morita(\Pr^{L})
\end{equation*}
as follows.

%\sss{Construction of $\wt X^{\Morita}_{\bu}$ and $\wt\cH_{\bu}$}
%We define a functor
%\begin{equation*}
%\wt X^{\Morita}_{\bu}: \ov{\cD_{\Sig}}\to \Morita(\frS)
%\end{equation*}
%as follows. 

For any simplex $\s$ of $\Sig$, let $\tilW_{\s}$ be the stabilizer of $\s$ under the extended affine Weyl group $\tilW$. Note that $\tilW_{\s}$ is down-closed in $\tilW$. Indeed, we have $\tilW_{\s}=W_{J}\rtimes \Om_{\s}$, where $J=r(\s)$ and $\Om_{\s}$ be the pointwise stabilizer of $\s$ under $\Om$ (it is also the set-theoretic stabilizer of $\s$ because of the $\Om$-semiregularity assumption on $\Sig$). Let
\begin{equation*}
\wt\cH_{\s}:=\wt\cH_{\tilW_{\s}}\subset \wt\cH.
\end{equation*}

%Let $J=r(\s)$, and observe that the normalizer of $\bP_{J}$ under $LG$ fits into an exact sequence
%\begin{equation*}
%1\to \bP_{J}\to N_{LG}(\bP_{J})\to \Om_{J}\to 1.
%\end{equation*}
%Here $\Om_{J}$ is the stabilizer of $J$ under $\Om$. Clearly $\Om_{\s}\subset \Om_{J}$, therefore it makes sense to define $\wt\bP_{\s}$ as the preimage of $\Om_{\s}$ in $N_{LG}(\bP_{J})$. Let $\wt X_{\s}:=\bI\bs \wt\bP_{\s}/\bI$. By construction, the underlying points of $\wt X_{\s}$ are in bijection with $\tilW_{\s}=W_{J}\rtimes\Om_{\s}$, the stabilizer of $\s$ under the extended affine Weyl group $\tilW$. Note that $\tilW_{\s}$ is down-closed in $\tilW$. Let
%\begin{equation}
%\wt\cH_{\s}:=\wt\cH_{\tilW_{\s}}\subset \wt\cH
%\end{equation}

If $\s,\s'\in \cD_{\Sig}$ are in the same $\Om$-orbit, let ${}_{\s'}\tilW_{\s}$ be the transporter of $\s$ to $\s'$ under $\tilW$: it consists of $w\in \tilW$ such that $w(\s)=\s'$, is a right torsor under $\tilW_{\s}$ and a left torsor under $\tilW_{\s'}$. Note that ${}_{\s'}\tilW_{\s}$ is also down-closed: it is equal to $W_{J'}\rtimes {}_{\s'}\Om_{\s}$ where $J'=r(\s')$ and ${}_{\s'}\Om_{\s}$ is the transporter of $\s$ to $\s'$ under $\Om$. The $(\wt\cH_{\s'}, \wt\cH_{\s})$-bimodule $\wt\cH_{{}_{\s'}\tilW_{\s}}$ is free of rank one over $\wt\cH_{\s'}$ and $\wt\cH_{\s}$, hence it induces a canonical equivalence  $\wt\cH_{\s}\lmod\isom \wt\cH_{\s'}\lmod$ in $\Morita(\Pr^{L})$. Moreover, such canonical equivalences carry canonical transitivity isomorphisms. Therefore we may identify all $\wt \cH_{\s}$ for $\s$ in the same $\Om$-orbit as a single object $\wt \cH_{[\s]}\in \Morita(\Pr^{L})$, where $[\s]\in \ov{\cD_{\Sig}}$ is a $\Om$-orbit on $\cD_{\Sig}$. The assignment $[\s]\mt \wt \cH_{[\s]}$ defines $\wt \cH^{\Morita,\rhd}_{\ov{\cD_{\Sig}},\bu}$ at the level of objects: we assign $\wt\cH\lmod$ to the cone vertex. 

Now we define $\cH^{\Morita,\rhd}_{\ov{\cD_{\Sig}},\bu}$ at the level of morphisms. For $[\s]\le [\s']\in \ov \cD/\Om$, we may choose orbit representatives $\s$ and $\s'$ such that $\s\le \s'$. Then $\Om_{\s}\subset \Om_{\s'}$ by the $\Om$-semiregularity of $\Sig$, hence $\tilW_{\s}\subset \tilW_{\s'}$. We thus have an inclusion $\wt \cH_{\s}\subset \wt \cH_{\s'}$ in $\Alg(\Pr^{L})$, which induces a functor $\wt \cH_{[\s]}\lmod\to \wt \cH_{[\s']}\lmod$ by induction. It is easy to check that this morphism in $\Morita(\Pr^{L})$ is independent of the choices of orbit representatives $\s,\s'$ such that $\s\le \s'$. Finally, for the unique morphism $[\s]\to \star$ in $\ov{\cD_{\Sig}}^{\rhd}$, we assign the $(\wt\cH, \wt\cH_{[\s]})$-bimodule $\wt\cH$. This defines $\wt \cH^{\Morita,\rhd}_{\ov{\cD_{\Sig}},\bu}$ at the level of morphisms.

The functor $\wt\cH^{\Morita,\rhd}_{\ov{\cD_{\Sig}},\bu}$ then induces a pair of adjoint functors
\begin{equation}\label{r subdiv}
\xymatrix{\ov{\r_{\Sig}}: \wt\cH\lmod\ar@<-.5ex>[r] & \ar@<-.5ex>[l]\lim_{[\s]\in \ov{\cD_{\Sig}}^{\opp}}\wt\cH_{[\s]}\lmod : \ov{\r_{\Sig}}^{L}}.
\end{equation}
%Using canonical equivalences between $\cH_{\s}\lmod$ and $\cH_{\s'}\lmod$ when $\s$ and $\s'$ are in the same $\Om$-orbit, we can regard an object of $\lim_{[\s]\in \ov\cD_{\Sig}^{\opp}}\wt\cH_{[\s]}\lmod$ as a collection of $\cH_{\s}$-modules $\cN_{\s}$, one for each $\s\in \cD_{\Sig}$, together with isomorphisms for each pair $\s,\s'\in \cD_{\Sig}$ such that $[\s]\le [\s']\in \ov \cD_{\Sig}$
%\begin{equation}\label{lim trans ov D Sig}
%\cH_{\tilW(\s,\s')^{-1}}\ot_{\wt\cH_{\s'}}\cN_{\s'}\cong \cN_{\s}\in \wt\cH_{\s}\lmod
%\end{equation}
%compatible with compositions.

\begin{theorem} In the setup of \S\ref{sss:tw loop group}, let $\Sig$ be an $\Om=\pi^{\alg}_{1}(G)$-semiregular subdivision of the fundamental alcove of $LG$. Then the functors $(\ov{\r_{\Sig}}^{L},\ov{\r_{\Sig}})$ in \eqref{r subdiv} are inverse equivalences. 
\end{theorem}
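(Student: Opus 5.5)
Since $\ov{\r_{\Sig}}$ is clearly conservative, the task reduces to identifying the target limit with $\lim_{(\cD/\Om)^{\opp}}\cH_{J}\lmod$ compatibly with $\ov{\r_{\Sig}}$. Once that is done, Theorem \ref{th:Om} (i.e.\ Theorem \ref{th:abs Om}) finishes the proof. The strategy is to mimic the proof of Theorem \ref{th:abs Om}: factor $\ov{\r_{\Sig}}$ through $(\cH\lmod)^{\Om}$ and handle the $\Om$-part and the subdivision separately.

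\emph{Step 1: unwind the $\Om$-part over the cover.} Using Lemma \ref{l:left cov gen}, $p:\cD_{\Sig}\to\ov{\cD_{\Sig}}$ is a left covering. Since $\Om_{\s}$ is both the pointwise and the setwise stabilizer, the map $\cD_{\Sig}/\Om\to \ov{\cD_{\Sig}}$ is an equivalence of categories after replacing each object $[\s]$ by the groupoid $\star/\Om_{\s}$.

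Concretely, I would show that $\wt\cH_{[\s]}\lmod=\wt\cH_{\tilW_{\s}}\lmod$ is identified with $(\cH_{r(\s)}\lmod)^{\Om_{\s}}$. Here $\Om_{\s}$ acts through the bimodules $\wt\cH_{W_{J}\om}$. The argument is the Barr--Beck step from the proof of Theorem \ref{th:abs Om}, applied to the inclusion $\cH_{J}\subset\wt\cH_{\s}$. That inclusion has monad given by $\bigoplus_{\om\in\Om_{\s}}\wt\cH_{W_{J}\om}$, and the needed freeness comes from condition (3).

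This yields an equivalence
\begin{equation*}
\lim_{\ov{\cD_{\Sig}}^{\opp}}\wt\cH_{[\s]}\lmod\simeq\lim_{(\cD_{\Sig}/\Om)^{\opp}}\cH_{r(\bu)}\lmod,
\end{equation*}
where the diagram on the right is $\s\mapsto \cH_{r(\s)}\lmod$ with bimodules $\wt\cH_{W_{r(\s')}\om}$ as in the construction of $\wt\cH^{\Morita,\rhd}_{\bu}$, pulled back along $r/\Om:\cD_{\Sig}/\Om\to\cD/\Om$. Formally, one right Kan extends along $\ov p$ and uses coCartesianness to compute the fibers.

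\emph{Step 2: remove $\Om$.} As in the proof of Theorem \ref{th:abs Om}, both limits over $(\cD_{\Sig}/\Om)^{\opp}$ and over $(\cD/\Om)^{\opp}$ are computed as $\Om$-fixed points, via right Kan extension along $\cD_{\Sig}/\Om\to\star/\Om$ and $\cD/\Om\to\star/\Om$. The fibers are $\lim_{\cD_{\Sig}^{\opp}}\cH_{r(\bu)}\lmod$ and $\lim_{\cD^{\opp}}\cH_{\bu}\lmod$ respectively.

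\emph{Step 3: the subdivision.} By Lemma \ref{l:subdiv cofinal}, $r$ is cofinal. Hence $r^{\opp}$ is initial, and restriction gives an equivalence $\lim_{\cD^{\opp}}\cH_{\bu}\lmod\isom\lim_{\cD_{\Sig}^{\opp}}\cH_{r(\bu)}\lmod$, compatible with the $\Om$-actions.

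Combining this with Corollary \ref{c:rho equiv} and the $\b$-equivalence $\wt\cH\lmod\simeq(\cH\lmod)^{\Om}$ from the proof of Theorem \ref{th:abs Om}, the functor $\ov{\r_{\Sig}}$ becomes a composite of equivalences. Consequently $\ov{\r_{\Sig}}^{L}$ is its inverse.

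The main obstacle is Step 1. The difficulty is making precise and coherent the identification of $\ov{\cD_{\Sig}}$-diagrams of $\wt\cH_{[\s]}$-modules with $\cD_{\Sig}/\Om$-diagrams of $\cH_{r(\s)}$-modules, including the higher compatibilities of the transporter bimodules $\wt\cH_{{}_{\s'}\tilW_{\s}}$. I would handle it by writing both as right Kan extensions along $\ov p$ and checking on fibers, using the left-covering property of Lemma \ref{l:left cov gen} to compute the fibers.
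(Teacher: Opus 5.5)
Your proposal is correct and follows essentially the same route as the paper. The paper also factors the functor through $\lim_{(\cD_{\Sig}/\Om)^{\opp}}\cH_{\s}\lmod$. It identifies that limit with $\lim_{\ov{\cD_{\Sig}}^{\opp}}\wt\cH_{[\s]}\lmod$ using the coCartesian property of $\ov p$ (Lemma \ref{l:left cov gen}) and the Barr--Beck identification of $\wt\cH_{\s}\lmod$ with the $\Om_{\s}$-(co)invariants of $\cH_{r(\s)}\lmod$, and then concludes with Theorem \ref{th:abs Om} and Lemma \ref{l:subdiv cofinal}. The differences are cosmetic. The paper works with left Kan extensions and colimits of induction functors before passing to right adjoints, while you work dually with limits. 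The paper also asserts cofinality of $r/\Om$ directly, whereas you justify that step fiberwise over $\star/\Om$ from cofinality of $r$ and Corollary \ref{c:rho equiv}; your opening remark about conservativity is not needed.
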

\begin{proof} Let $\wt\cH^{\Morita,\rhd}_{\cD_{\Sig},\bu}$ be the composition
\begin{equation*}
\wt\cH^{\Morita,\rhd}_{\cD_{\Sig},\bu}:(\cD_{\Sig}/\Om)^{\rhd}\xr{(r/\Om)^{\rhd}} (\cD/\Om)^{\rhd}\xr{\wt\cH^{\Morita, \rhd}_{\bu}} \Morita(\Pr^{L}).
\end{equation*}
It induces a functor
\begin{equation}
\r_{\Sig}: \wt\cH\lmod\xr{\rho}\lim_{J\in (\cD/\Om)^{\opp}}\cH_{J}\lmod\xr{R}\lim_{\s\in (\cD_{\Sig}/\Om)^{\opp}}\cH_{\s}\lmod.
\end{equation}
Here $\r$ is the functor in Theorem \ref{th:abs Om}, which is known to be an equivalence, and $R$ is induced by $r$. By Lemma \ref{l:subdiv cofinal}, $r/\Om:\cD_{\Sig}/\Om\to \cD/\Om$ is cofinal. Therefore $R$ is an equivalence. We conclude that $\r_{\Sig}$ is an equivalence.

%In a completely similar manner to the construction of $\wt\cH^{\Morita, \rhd}_{\bu}$, we define a functor
%\begin{equation*}
%\wt\cH^{\Morita,\rhd}_{\cD_{\Sig},\bu}:(\cD_{\Sig}/\Om)^{\rhd}\to \Morita(\Pr^{L})
%\end{equation*}
%sending $\s\in \cD_{\Sig}$ to $\cH_{\s}\lmod=\cH_{r(\s)}\lmod$ and sending the morphism $\om:\s\to \s'$ in $\cD_{\Sig}/\Om$ (i.e., $\om\in\Om$ with $\om(\s)\le \s'$) to the $(\cH_{\s'}, \cH_{\s})$-bimodule $\wt\cH_{W_{r(\s')\om}}$. 

%The same argument of Theorem \ref{th:abs Om} shows that $\wt\cH^{\Morita,\rhd}_{\cD_{\Sig},\bu}$ induces an equivalence
%\begin{equation}\label{lim DSig stack}
%\wt\cH\lmod\isom\lim_{\s\in (\cD_{\Sig}/\Om)^{\opp}}\cH_{\s}\lmod.
%\end{equation}
%The argument builds on the analog of Theorem \ref{th:abs} for $\cD_{\Sig}$
%\begin{equation*}
%\cH\lmod\isom\lim_{\s\in (\cD_{\Sig})^{\opp}}\cH_{\s}\lmod
%\end{equation*}
%which in turn follows from Theorem \ref{th:abs} and Lemma \ref{l:subdiv cofinal} that says that  $r^{\opp}: (\cD_{\Sig})^{\opp}\to \cD^{\opp}$ is final (we observe that $\wt\cH^{\Morita,\rhd}_{\cD_{\Sig},\bu}$ when restricted to $\cD_{\Sig}$ is the pullback of $\cH^{\Morita}_{\bu}: \cD\to \Alg(\Pr^{L})\to \Morita(\Pr^{L})$). 

Let $\ov p: \cD_{\Sig}/\Om\to \ov{\cD_{\Sig}}$ be the natural projection. We have a map $\wt\cH^{\Morita}_{\cD_{\Sig}, \bu}\to \ov p^{*}\wt\cH^{\Morita}_{\ov{\cD_{\Sig}},\bu}$ (removing $\rhd$ means removing the cone vertex in the source) whose value at $\s\in \cD_{\Sig}/\Om$ is given by the induction functor $\cH_{\s}\lmod\to  \wt\cH_{\s}\lmod$. It therefore induces a map in $\Fun(\ov{\cD_{\Sig}}, \Morita(\Pr^{L}))$
\begin{equation}\label{LKE isom}
\ov p_{!}\wt\cH^{\Morita}_{\cD_{\Sig}, \bu}\to \wt\cH^{\Morita}_{\ov{\cD_{\Sig}},\bu}.
\end{equation}
Here $\ov p_{!}$ denotes the left Kan extension along $\ov p$. We claim that \eqref{LKE isom} is an isomorphism. Indeed, by Lemma \ref{l:left cov gen}, $\ov p$ is coCartesian, hence the value of $\ov p_{!}\wt\cH^{\Morita}_{\cD_{\Sig}, \bu}$ at $[\s]$ is 
\begin{equation*}
\colim_{\t\in [\s]/\Om}\cH_{\t}\lmod.
\end{equation*}
Note that $[\s]/\Om\cong \{\s\}/\Om_{\s}$, therefore the above colimit is the coinvariants $ (\cH_{\s}\lmod)_{\Om_{\s}}$, which is canonically equivalent to $\wt\cH_{\s}\lmod$ by the induction functor (using the same argument in the proof of Theorem \ref{th:abs Om} showing that $\b$ is an isomorphism). 

Taking colimits of both sides of \eqref{LKE isom} we obtain an isomorphism
\begin{equation*}
\colim_{\s\in\cD_{\Sig}/\Om}\cH_{\s}\lmod\isom \colim_{[\s]\in \ov{\cD_{\Sig}}}\wt\cH_{[\s]}\lmod
\end{equation*}
that is compatible with further maps to $\wt\cH\lmod$. Passing to left adjoints, the functor $\r_{\Sig}$ factors as 
\begin{equation*}
\wt\cH\lmod\xr{\ov{\r_{\Sig}}}\lim_{[\s]\in \ov{\cD_{\Sig}}^{\opp}}\wt\cH_{[\s]}\lmod\isom\lim_{\s\in (\cD_{\Sig}/\Om)^{\opp}}\cH_{\s}\lmod.
\end{equation*}
Since $\r_{\Sig}$ is an equivalence, so is $\ov{\r_{\Sig}}$. This finishes the proof.
\end{proof}

\begin{remark}
When $LG$ is not connected, we do not see how to present $\cH$ as a colimit of finite Hecke categories in the category $\Alg(\Pr^L)$. But we can at least observe that the diagram of left adjoints of \eqref{r subdiv} preserve  full subcategories
of rank one free modules and thus  $\rho^L$  restricts to  an equivalence thereof
$\colim_{[\s]\in \ov \cD_{\Sig}}  \BB\wt\cH_{[\s]}\isom \BB\cH$.
\end{remark}

\end{document}